\setlist{labelindent=1pt,itemsep=0.1cm}
\setlist[itemize]{leftmargin=0.7cm}
\setlist[enumerate]{itemindent=0em,leftmargin=0.7cm}
\begin{document}
\title*{Fixed point results for set-contractions on semi-metric space with a directed graph}
\titlerunning{Fixed point results for set-contractions on semi-metric space}
\author{Talat Nazir  \and Zakaria Ali \and Shahin Nosrat Jogan \and Sergei Silvestrov}
\authorrunning{T. Nazir, Z. Ali, S. N. Jogan, S. Silvestrov}

\institute{Talat Nazir, Zakaria Ali, Shahin Nosrat Jogan
\at
Department of Mathematical Sciences, University of South Africa, Florida 0003, South Africa. \\  \email{talatn@unisa.ac.za, alizi@unisa.ac.za, nosgo.bots@gmail.com}
\and
Sergei Silvestrov
\at Division of Mathematics and Physics, School of Education, Culture and Communication, M{\"a}lardalen University, Box 883, 72123 V{\"a}ster{\aa}s, Sweden. \\ 
\email{sergei.silvestrov@mdu.se}
}
%
%


\maketitle
\label{chap:NazirAliJoganSilvestrov:FPSMS}

\abstract*{Fixed point results with respect to generalized rational contractive mappings in semi-metric spaces endowed with a directed graph are proved. Some examples are provided to illustrate the results. The obtained results extend, improve and generalize many results in the existing literature.
\keywords{fixed point, semi-metric space, generalized graph contraction, multi-valued mapping, directed graph}\\
{\bf MSC 2020 Classification:} 47H09, 47H10, 54C60, 54H25}

\abstract{Fixed point results with respect to generalized rational contractive mappings in semi-metric spaces endowed with a directed graph are proved. Some examples are provided to illustrate the results. The obtained results extend, improve and generalize many results in the existing literature.
\keywords{fixed point, semi-metric space, generalized graph contraction, multi-valued mapping, directed graph}\\
{\bf MSC 2020 Classification:} 47H09, 47H10, 54C60, 54H25}

\section{Introduction}
\label{Section:section-1-introduction-semi-metric}
Among various different kinds of extensions or restrictions of the definition of a metric  one of them which we consider in this chapter is the Hausdorff semi-metric $\mathrm{d}_{s}$ imposed on a nonempty set $Y$. The main restriction of such a metric is that it does not satisfy the \emph{triangle inequality}. We prove our results by employing the generalized rational graph map on the closed and bounded subsets of the semi-metric space $(Y, \mathrm{d}_{s})$.
Frechet \cite{Frechet1906} studied
the concept of metric space defined on a nonempty set X that induces the Hausdorff topology on $X$. Consequently, there have been several generalizations of a metric function. Among which, one of the important generalization is the concept of a semi-metric or symmetric metric space which gives rise to a topology which is not a Hausdorff.
Zhu et al. \cite{Zhu2005} studied the equivalent contractive conditions in symmetric spaces. Imdad \cite{Imdad2007} obtained coincidence and fixed points of mappings in symmetric spaces under strict contractions. Further useful results in this direction were proved in \cite{Aamri2005, Aamri2003, Aranelovic2012, Cho2008, Shahzad2016}.

\begin{definition}[semi-metric space, \cite{Wilson1931}]
	\label{Def:definition-1}
	 Let $Y$ be a nonempty set. A function $\mathrm{d}_{s} : Y \times Y \rightarrow \mathbb{R_{+}}$ is known as (Hausdorff) semi-metric on $Y$ if for all$\ r, t \in Y$, the following conditions hold:
	\begin{enumerate}[label=\textup{(\roman*)}, ref=(\roman*)]
		\item $\mathrm{d}_{s}(r,t) \geq 0$ and equal to zero if and only if $r=t$;
		\item $\mathrm{d}_{s}(r,t) = \mathrm{d}_{s}(t,r)$.
	\end{enumerate}
	$(Y, \mathrm{d}_{s})$ is called a (Hausdorff) semi-metric space.
\end{definition}
Let $(Y, \mathrm{d}_{s})$ be a semi-metric space. Let $y \in Y$ and any $\varrho > 0$.
An open ball centered at $y$ with radius $\varrho$ is defined as follows,
$\mathrm{B}(y, \varrho) = \{x \in Y : \mathrm{d}_{s}(y, x) < \varrho \}.$
A topology $\tau_{\mathrm{d}_{s}}$ on $Y$ is represented by open sets as
\begin{align*}
	\tau_{\mathrm{d}_{s}} = \{U \subseteq Y : \forall \text{ } u \in U~ \exists \text{ } \varrho \in \mathbb{R_{+}} \text{ such that } B(u, \varrho) \subseteq U \}.
\end{align*}

\begin{definition}
	\label{Def:definition-2}
	Let $(Y, \mathrm{d}_{s})$ be a semi-metric space and a sequence $\{u_{n}\} \subseteq (Y, \mathrm{d}_{s})$. Then for every $u \in Y$ and $\varrho > 0$, the open ball $\mathrm{B}(u, \varrho)$ is a neighbourhood of $u$ with respect to the topology $\tau_{\mathrm{d}_{s}}$. Also, we say that $\lim\limits_{n \rightarrow \infty}\mathrm{d}_{s}(u_{n}, u) = 0$ if and only if $\{u_{n}\}$ converges to $u$ in $\tau_{\mathrm{d}_{s}}$.
	
The following properties are in place of the \emph{traingle inequality} for semi-metric spaces. Let $\{u_{n}\}, \{v_{n}\}$ and $\{w_{n}\}$ be sequence in a semi-metric space $(Y, \mathrm{d}_{s})$ and $u, v \in Y$. Then
	
\begin{enumerate}[label=\textup{\alph*)}, ref=\alph*]
		\item $\lim\limits_{n \rightarrow \infty}\mathrm{d}_{s}(u_{n}, u) = 0$ and $\lim\limits_{n \rightarrow \infty}\mathrm{d}_{s}(u_{n}, v) = 0 \Longrightarrow u = v$.
		\item $\lim\limits_{n \rightarrow \infty}\mathrm{d}_{s}(u_{n}, u) = 0$ and $\lim\limits_{n \rightarrow \infty}\mathrm{d}_{s}(u_{n}, v_{n}) = 0 \Longrightarrow \lim\limits_{n \rightarrow \infty}\mathrm{d}_{s}(v, u) = 0$.
		\item $\lim\limits_{n \rightarrow \infty}\mathrm{d}_{s}(u_{n}, v_{n}) = 0$ and $\lim\limits_{n \rightarrow \infty}\mathrm{d}_{s}(v_{n}, w_{n}) = 0 \Longrightarrow \lim\limits_{n \rightarrow \infty}d(u_{n}, w_{n}) = 0$.
		\item $\lim\limits_{n \rightarrow \infty}\mathrm{d}_{s}(u_{n}, v_{n}) = 0$ and $\lim\limits_{n \rightarrow \infty}\mathrm{d}_{s}(v_{n}, w_{n}) = 0 \Longrightarrow \lim\limits_{n \rightarrow \infty}\mathrm{d}_{s}(u_{n}, w_{n}) < \infty$.
		\item $\lim\limits_{n \rightarrow \infty}\mathrm{d}_{s}(u_{n}, u) = 0 \Longrightarrow \lim\limits_{n \rightarrow \infty}\mathrm{d}_{s}(u_{n}, v) = \mathrm{d}_{s}(u, v)$.
\end{enumerate}
\end{definition}
\noindent Below we provide the definition for a sequence $\{u_{n}\}$ to be Cauchy and complete in a semi-metric space.
\begin{definition}
	\label{Def:definition-3}
Let $\{u_n\}$ be a sequence in a semi-metric space $(Y, \mathrm{d}_{s})$.
\begin{enumerate}[label=\textup{\arabic*)}, ref=\arabic*]
\item If for any given $\epsilon > 0$, there exists $n(\epsilon) \in \mathbb{N}$ such that $\mathrm{d}_{s}(u_{n}, u_{m}) < \epsilon$ for all $n, m \geq n(\epsilon)$, then $\{u_{n}\}$ is called a $d$-Cauchy sequence.
\item $(Y, \mathrm{d}_{s})$ is called $S$-complete, if for each $\mathrm{d}_{s}$-Cauchy sequence $\{u_{n}\}$ in $Y$, there exists an element $u \in Y$ such that $\lim\limits_{n \rightarrow \infty} \mathrm{d}_{s}(u_{n}, u) = 0$.
\item $(Y, \mathrm{d}_{s})$ is called $\mathrm{d}_{s}$-Cauchy complete, if every $\mathrm{d}_{s}$-Cauchy sequence $\{u_{n}\}$ in $Y$ converges to some point $u \in Y$.
\end{enumerate}
\end{definition}

\begin{definition}
	\label{Def:definition-4-PH-metric}
	Let $(Y, \mathrm{d}_{s})$ be a semi-metric space. For $A, B \in CB(Y)$, define the map  			$\mathcal{H}_{s}:CB(Y) \times CB(Y) \rightarrow \mathbb{R_{+}}$ as
	\begin{align*}
		\mathcal{H}_{s}(A, B) = \mathrm{max}\{\mathcal{D}(A, B), \mathcal{D}(B, A)\},
	\end{align*}
where, $\mathcal{D}(A, B) = \mathrm{sup}\{\mathrm{d}_{s}(a, B): a \in A\}$ and $\mathrm{d}_{s}(x, A) = \mathrm{inf}\{\mathrm{d}_{s}(x, a) : a \in A\}$, and $CB(Y)$ is the closed and bounded subsets of the set $Y$.
The mapping $\mathcal{H}_{s}$ is called the Pompieu-Hausdorff $(PH)$-semi-metric induced by $\mathrm{d}_{s}$.
\end{definition}
\begin{definition}
	\label{Def:definition-5-closed-semi-metric}
	Let $(Y, \mathrm{d}_{s})$ be a semi-metric space and $\emptyset \neq U \subseteq Y$. $U$ is said to be \textbf{$\mathrm{d}_{s}$-closed}, if $\overline{U}=U$, where $\overline{U} = \{u \in Y : \mathrm{d}_{s}(u, U) = 0\}$.
\end{definition}
Clearly, if $(Y, \mathrm{d}_{s})$ is a $\mathrm{d}_{s}$-Cauchy complete semi-metric,
then $(CB(Y), \mathcal{H}_{s})$ is also a $\mathrm{d}_{s}$-Cauchy complete semi-metric space.
\begin{definition}
	\label{Def:definition-6-epsilon-chainable}
	A semi-metric space $(Y, \mathrm{d}_{s})$ is said to be $\epsilon$-chainable space for some given $\epsilon > 0$, there is $n \in \mathbb{N}$ and a sequence $\{y_{n}\}$ such that
	\begin{align*}
		y_{0} = u, \quad y_{n} = v \quad \text{ and } \quad \mathrm{d}_{s}(x_{i-1}, x_{i}) < \epsilon \text{ for } i = 1,  \ldots, n.
	\end{align*}
\end{definition}
Analogous results to that of Nadler's Lemmas as in \cite{Nadler1969} are now stated for the semi-metric space.
\begin{lemma}
	\label{Lemma:lemma-1-Nadler-analogue}
	Let $(Y, \mathrm{d}_{s})$ be a semi-metric space and $U, V \in CB(Y)$. Then $\mathrm{d}_{s}(u, V) \leq \mathcal{H}_{s}(U, V)$, whenever $u \in U$.
\end{lemma}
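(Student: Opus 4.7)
The plan is to unwind the definitions of $\mathcal{H}_s$ and $\mathcal{D}$ from Definition~\ref{Def:definition-4-PH-metric} and observe that the inequality is immediate, with no use of any convergence or triangle-type property. Since the statement has no triangle inequality lurking in it, the proof is essentially a two-line chain of inequalities; the main (and in fact only) content is to identify the correct supremum in the definition of $\mathcal{D}(U,V)$.

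First I would fix $u \in U$ and recall that $\mathrm{d}_s(u,V) = \inf\{\mathrm{d}_s(u,v) : v \in V\}$ is a well-defined nonnegative real number (since $V$ is nonempty, being in $CB(Y)$, and $\mathrm{d}_s$ is $\mathbb{R}_+$-valued). Next, since $u$ is one particular element of $U$, the number $\mathrm{d}_s(u,V)$ lies in the set $\{\mathrm{d}_s(a,V) : a \in U\}$ whose supremum is $\mathcal{D}(U,V)$; consequently $\mathrm{d}_s(u,V) \leq \mathcal{D}(U,V)$. Finally, by the definition $\mathcal{H}_s(U,V) = \max\{\mathcal{D}(U,V), \mathcal{D}(V,U)\}$, we have $\mathcal{D}(U,V) \leq \mathcal{H}_s(U,V)$, and stringing these two bounds together yields $\mathrm{d}_s(u,V) \leq \mathcal{H}_s(U,V)$.

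There is no real obstacle here: the only thing one might worry about is whether $\mathcal{D}(U,V)$ is finite so that the supremum makes sense as a real number, but since $U,V \in CB(Y)$ are bounded this is automatic (and in any event the inequality would still hold in $[0,\infty]$). Thus the proof reduces to a direct appeal to the definitions and the elementary fact that any element of a set is bounded above by the supremum of that set.
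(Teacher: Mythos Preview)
Your proof is correct and is essentially identical to the paper's own argument: the paper writes the single chain $\mathrm{d}_{s}(u,V)=\inf_{v\in V}\mathrm{d}_{s}(u,v)\leq\sup_{a\in U}\mathrm{d}_{s}(a,V)\leq\mathcal{H}_{s}(U,V)$, which is exactly your two steps (element bounded by supremum, then $\mathcal{D}(U,V)\leq\max\{\mathcal{D}(U,V),\mathcal{D}(V,U)\}$) compressed into one line.
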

\begin{proof}
	If $u \in U$, then
	$$ \mathrm{d}_{s}(u , V) = \inf\limits_{v \in V} \mathrm{d}_{s}(u, v)  \leq \sup\limits_{u \in U} \mathrm{d}_{s}(u, V) \leq \mathcal{H}_{s}(U, V).
	$$
\end{proof}
\begin{lemma}
	\label{Lemma:lemma-2}
	Let $(Y, \mathrm{d}_{s})$ be a semi-metric space and $U, V \in CB(Y)$. Then for any $u \in U$, there exists $v \in V$ such that for any $\mu > 1$ implies $\mathrm{d}_{s}(u, v) \leq \mu \mathcal{H}_{s}(U, V)$.
\end{lemma}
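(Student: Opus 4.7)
The plan is to read the statement as: for every $u\in U$ and every $\mu>1$ there exists $v\in V$ with $\mathrm{d}_{s}(u,v)\le \mu\,\mathcal{H}_{s}(U,V)$, and to derive it directly from Lemma~\ref{Lemma:lemma-1-Nadler-analogue} together with the characterising property of the infimum. Fix $u\in U$ and $\mu>1$. By Lemma~\ref{Lemma:lemma-1-Nadler-analogue} we already have
\[
\mathrm{d}_{s}(u,V)=\inf_{v\in V}\mathrm{d}_{s}(u,v)\le \mathcal{H}_{s}(U,V),
\]
so it suffices to exhibit $v\in V$ with $\mathrm{d}_{s}(u,v)\le\mu\,\mathrm{d}_{s}(u,V)$.

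I would split into two cases according to whether $\mathrm{d}_{s}(u,V)$ is positive or zero. If $\mathrm{d}_{s}(u,V)>0$, then since $\mu>1$ we have $\mu\,\mathrm{d}_{s}(u,V)>\mathrm{d}_{s}(u,V)$; the number $\mu\,\mathrm{d}_{s}(u,V)$ therefore cannot be a lower bound of the set $\{\mathrm{d}_{s}(u,v):v\in V\}$, so by the very definition of infimum there exists $v\in V$ with $\mathrm{d}_{s}(u,v)<\mu\,\mathrm{d}_{s}(u,V)\le \mu\,\mathcal{H}_{s}(U,V)$, which is what we need.

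If $\mathrm{d}_{s}(u,V)=0$, then by Definition~\ref{Def:definition-5-closed-semi-metric} we have $u\in\overline{V}$, and since $V\in CB(Y)$ is $\mathrm{d}_{s}$-closed this gives $u\in V$. Choosing $v=u$ yields $\mathrm{d}_{s}(u,v)=0\le\mu\,\mathcal{H}_{s}(U,V)$ trivially (the right-hand side is nonnegative). Combining the two cases completes the argument.

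The only subtle point, and the step I expect a reader to question, is the zero case: one has to invoke $\mathrm{d}_{s}$-closedness of $V$ (built into the definition of $CB(Y)$) to promote $\mathrm{d}_{s}(u,V)=0$ to $u\in V$. In the metric setting this is automatic from the triangle inequality, but here the absence of the triangle inequality is exactly why Definition~\ref{Def:definition-5-closed-semi-metric} is phrased through the closure operator $\overline{U}=\{u:\mathrm{d}_{s}(u,U)=0\}$, which is what makes the zero case work. Apart from this observation, the proof is a direct application of the $\inf$-characterisation.
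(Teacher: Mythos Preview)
Your argument is correct. The paper proves the lemma by contradiction: assuming that for some $u\in U$ one has $\mathrm{d}_{s}(u,v)>\mu\,\mathcal{H}_{s}(U,V)$ for every $v\in V$, it infers $\mu\,\mathcal{H}_{s}(U,V)<\mathrm{d}_{s}(u,V)\le \mathcal{H}_{s}(U,V)$ via Lemma~\ref{Lemma:lemma-1-Nadler-analogue}, a contradiction since $\mu>1$. Your direct case split is a mild variant of the same idea; the main difference is that you treat the case $\mathrm{d}_{s}(u,V)=0$ separately and invoke $\mathrm{d}_{s}$-closedness of $V$ to obtain $u\in V$. This is arguably more careful than the paper's short contradiction, which glosses over the degenerate case $\mathcal{H}_{s}(U,V)=0$ (where the strict inequality it writes down need not follow from the infimum alone). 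Either route works, and yours makes explicit the one place where the hypothesis $V\in CB(Y)$ is actually used.
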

\begin{proof}
	Suppose by way of contradiction that there exists $u \in U$ such that for every $v \in V$, we have $d(u, v) > \mu \mathcal{H}_{s}(U, V)$. By Lemma \ref{Lemma:lemma-1-Nadler-analogue}, it follows that
	\begin{align*}
		\mu \mathcal{H}_{s}(U, V) < \mathrm{d}_{s}(u, V) \leq \mathcal{H}_{s}(U, V)
	\end{align*}
which is a contradiction.
\end{proof}
\begin{lemma}[\cite{Jachymski1995}]
	\label{Lemma:lemma-3-Jachymski1995}
	If $U, V \in CB(Y)$ with $\mathcal{H}_{s}(U, V) < \epsilon$, then for each $u \in U$ there exists an element $v \in V$ such that $\mathrm{d}_{s}(u, v) < \epsilon$.
\end{lemma}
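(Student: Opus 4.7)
The plan is to reduce this lemma to Lemma~\ref{Lemma:lemma-1-Nadler-analogue} plus an elementary infimum argument. Given $u \in U$, Lemma~\ref{Lemma:lemma-1-Nadler-analogue} already tells us that $\mathrm{d}_{s}(u, V) \leq \mathcal{H}_{s}(U, V) < \epsilon$, so the bulk of the work is done; it only remains to convert the inequality on the infimum $\mathrm{d}_{s}(u,V) = \inf_{v \in V} \mathrm{d}_{s}(u,v)$ into the existence of an actual element $v \in V$ achieving the strict inequality.

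Concretely, I would set $\delta := \epsilon - \mathrm{d}_{s}(u, V) > 0$, since by the above $\mathrm{d}_{s}(u, V) < \epsilon$. By the defining property of the infimum, any real number strictly larger than $\mathrm{d}_{s}(u, V)$ cannot be a lower bound of $\{\mathrm{d}_{s}(u, v) : v \in V\}$, so there exists $v \in V$ with
\[
\mathrm{d}_{s}(u, v) < \mathrm{d}_{s}(u, V) + \delta = \epsilon,
\]
which is exactly the conclusion sought.

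There is no real obstacle in the argument; the only mild point to be careful about is that the infimum $\mathrm{d}_{s}(u,V)$ need not be attained in a general semi-metric space (we do not have compactness of $V$ at our disposal, only closedness and boundedness in the semi-metric sense), so one must argue via the approximate-infimum reasoning above rather than by directly picking a minimizer. This is precisely why the statement is phrased with a strict inequality $\mathcal{H}_{s}(U,V) < \epsilon$ in the hypothesis, creating the slack $\delta$ that we exploit.
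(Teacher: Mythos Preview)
Your argument is correct. Note, however, that the paper does not actually supply its own proof of this lemma: it is stated with a citation to \cite{Jachymski1995} and left unproved in the text. There is therefore nothing in the paper to compare your argument against directly. That said, your approach is the natural one and dovetails with the surrounding material: you invoke Lemma~\ref{Lemma:lemma-1-Nadler-analogue} (which the paper \emph{does} prove) to obtain $\mathrm{d}_{s}(u,V)\leq \mathcal{H}_{s}(U,V)<\epsilon$, and then use the standard infimum approximation to produce a concrete $v\in V$ with $\mathrm{d}_{s}(u,v)<\epsilon$. Your remark that closedness and boundedness in the semi-metric sense do not guarantee the infimum is attained, so one must pass through the $\delta$-slack argument rather than pick a minimizer, is well taken and is exactly why the strict inequality in the hypothesis is needed.
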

We consider the contraction $f: Y \rightarrow Y$ in a semi-metric space and state an associated result. Furthermore, we also identify $(X, d)$ with a directed graph $G$.

\begin{definition}
	\label{Def:defintion-7-contraction-boundedness}
	Let $(Y, \mathrm{d}_{s})$ be a semi-metric space. $f: Y \rightarrow Y$ is said to be a contraction, if there exists $\mathrm{c} \in [0, 1)$ such that
	\begin{align*}
		\mathrm{d}_{s}(fx, fy) \leq \mathrm{c} \mathrm{d}_{s}(x, y) \text{ for all } x, y \in Y.
	\end{align*}
\end{definition}

\noindent We say the semi-metric space $(Y, \mathrm{d}_{s})$ is bounded, if there is a real number $\mathfrak{M}$ such that $\mathfrak{M} < \infty$. Where, $\mathfrak{M} = \sup\{\mathrm{d}_{s}(x, y): x, y \in Y\}$.

\begin{lemma}[\cite{Jachymski1995}]
	\label{Lemma:lemma-4}
	Let $(Y, \mathrm{d}_{s})$ be a bounded $\mathrm{d}_{s}$-Cauchy semi-metric space and $f: Y \rightarrow Y$ a contraction. Then $f$ has a unique fixed point $y^{*} \in Y$. Furthermore, for each $y_{0} \in Y$, the iterative sequence $\{y_{0}, fy_{0}, f^{2}y_{0}, \ldots\}$ converges to $y^{*}$.
\end{lemma}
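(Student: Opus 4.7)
The plan is to run the standard Banach iteration argument, with one careful twist to sidestep the missing triangle inequality. Start with an arbitrary $y_{0} \in Y$ and set $y_{n} = f^{n}y_{0}$. I would carry out the proof in four steps: (i) show $\{y_{n}\}$ is $\mathrm{d}_{s}$-Cauchy, (ii) extract a limit $y^{*}$ from completeness, (iii) verify $fy^{*} = y^{*}$, and (iv) conclude uniqueness.

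The key step — and the one that would normally need the triangle inequality — is showing $\{y_{n}\}$ is Cauchy. The usual telescoping estimate $\sum c^{k}\mathrm{d}_{s}(y_{1},y_{0})$ is unavailable. Instead, I would exploit the hypothesis of boundedness. For $m > n$, apply the contraction inequality $n$ times to the pair $(y_{0}, y_{m-n})$:
\begin{align*}
\mathrm{d}_{s}(y_{n}, y_{m}) \;=\; \mathrm{d}_{s}(f^{n}y_{0}, f^{n}y_{m-n}) \;\leq\; \mathrm{c}^{n}\,\mathrm{d}_{s}(y_{0}, y_{m-n}) \;\leq\; \mathrm{c}^{n}\,\mathfrak{M}.
\end{align*}
Since $\mathrm{c} \in [0,1)$ and $\mathfrak{M} < \infty$, given $\epsilon > 0$ one simply picks $N$ large enough that $\mathrm{c}^{N}\mathfrak{M} < \epsilon$, and then $\mathrm{d}_{s}(y_{n}, y_{m}) < \epsilon$ for all $m \geq n \geq N$. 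So $\{y_{n}\}$ is $\mathrm{d}_{s}$-Cauchy, and by $\mathrm{d}_{s}$-Cauchy completeness there exists $y^{*} \in Y$ with $\lim_{n\to\infty}\mathrm{d}_{s}(y_{n}, y^{*}) = 0$. This is the step I expect to be the real obstacle; once boundedness is brought in, it becomes routine.

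For the fixed point property, I would estimate
\begin{align*}
\mathrm{d}_{s}(fy^{*}, y_{n+1}) \;=\; \mathrm{d}_{s}(fy^{*}, fy_{n}) \;\leq\; \mathrm{c}\,\mathrm{d}_{s}(y^{*}, y_{n}) \longrightarrow 0,
\end{align*}
so $y_{n+1} \to fy^{*}$ in the semi-metric. Since also $y_{n+1} \to y^{*}$, property (a) in Definition \ref{Def:definition-2} (uniqueness of limits of semi-metric sequences) forces $fy^{*} = y^{*}$.

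For uniqueness, suppose $fz^{*} = z^{*}$ as well. Then $\mathrm{d}_{s}(y^{*}, z^{*}) = \mathrm{d}_{s}(fy^{*}, fz^{*}) \leq \mathrm{c}\,\mathrm{d}_{s}(y^{*}, z^{*})$; since $\mathrm{c} < 1$, this gives $\mathrm{d}_{s}(y^{*}, z^{*}) = 0$, hence $y^{*} = z^{*}$ by Definition \ref{Def:definition-1}(i). The convergence assertion for arbitrary starting point $y_{0}$ is exactly what step (ii) provides.
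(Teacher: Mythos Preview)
The paper does not supply its own proof of this lemma; it is quoted from \cite{Jachymski1995} and left unproved in the text, so there is nothing in the paper to compare your argument against directly.

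Your argument is the standard one and is essentially how the original source proceeds. The Cauchy step is exactly right: replacing the telescoping sum by the uniform bound $\mathrm{d}_{s}(y_{n},y_{m})\le \mathrm{c}^{\min(n,m)}\mathfrak{M}$ is the whole point of the boundedness hypothesis, and you have identified it correctly. Uniqueness is fine as written.

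One point deserves a flag. In step~(iii) you appeal to property~(a) of Definition~\ref{Def:definition-2} to conclude $fy^{*}=y^{*}$ from $y_{n+1}\to y^{*}$ and $y_{n+1}\to fy^{*}$. In this paper those properties (a)--(e) are presented as Wilson-type axioms one may impose \emph{in lieu of} the triangle inequality; they are not consequences of Definition~\ref{Def:definition-1} alone, and indeed uniqueness of limits can fail in a bare semi-metric space. The lemma as stated here does not list (a) among its hypotheses, but the original paper \cite{Jachymski1995} does work under a Hausdorff-type assumption of this kind. Your proof is therefore correct in the intended setting; just make the reliance on property~(a) explicit rather than silent.
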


\section{Graphic Contractive Mappings in Semi Metric Spaces}
Jachymski in \cite{Jachymski2008} describes his construction of imposing a graph structure on a metric space. Ran and Reurings' result in \cite{RanReurings2004}, as mentioned previously was the main motivation for such an effort towards generalization of the order or poset structure of a metric space. The usefulness of imposing a graph structure on $(Y, \mathrm{d}_{s})$ is that it generalizes the ordered structure on $(Y, \mathrm{d}_{s})$. One the first result in fixed-point theory with a poset structure was obtained by Ran and Reurings in \cite{RanReurings2004}. The latter result was the main thrust in the generalization of equipping spaces with a graph structure which was motivated by Jachymski in \cite{Jachymski2008}. Recently, Latif et al. \cite{Latif2019} obtained some fixed point results for class of set-contraction mappings endowed with a directed graph. In this chapter, we established several fixed point results with respect to generalized
rational contractive mappings in semi-metric spaces endowed with a directed graph. These results extend and generalized several comparable results in \cite{Frechet1906, Hicks1998, Latif2012, MotPetrusel2009}.

Below we define on how to identify a metric space with a graph $G$.
\begin{definition}
\label{Def:definition-8-graph-id}
Let $Y$ be a non-empty set. We identify $Y$ with a directed graph $G = (V(G), E(G))$, as follows:
\begin{enumerate}[label=\textup{\arabic*)}, ref=\arabic*]
\item $Y = V(G)$, \text{where $V(G)$ is the vertex set};
\item $\Delta \subseteq E(G)$, where $\Delta \subseteq Y \times Y$ is the diagonal \text{and $E(G)$ is the edge set};
\item Loops at a vertex are allowed, for all $v \in V(G)$;
\item Parallel edges between distinct vertices are prohibited.
\end{enumerate}
Since $V(G) = Y$, we shall freely refer to elements of $V(G)$ or $Y$ interchangeably.
\end{definition}
The semi-metric space $(Y, \mathrm{d}_{s})$ is identified with a directed graph $G$ as in Definition \ref{Def:definition-8-graph-id} such that $G$ is weighted, that is, for each $(u, v) \in E(G)$ we assign the weight $\mathrm{d}_{s}(u, v)$. Where loops have weight $\mathrm{d}_{s}(u, u) = 0$ for all $u \in V(G)$. Moreover, the $PH$-semi-metric induced by the semi-metric $\mathrm{d}_{s}$ assigns a non-zero $PH$-weight to every $U, V \in CB(Y)$ and $\mathcal{H}_{s}(U, V) = 0$, whenever $U=V$.
If $u,v\in V(G)$, then a (directed) path in $G$ between $u$ and $v$
having length $l\in
\mathbb{N}$ is a finite sequence $\{y_{n}\}$ $(n\in \{0,\dots,l\})$ of
the vertices so that $u=y_{0}$, $v=y_{l}$ also $
(y_{i-1}, y_{i})\in E(G)$ for $i \in \{1, \ldots,l\}.$
If between every two vertices there is a (directed) path, then $G$
is called connected. If $\widetilde{G}$ is connected, then $G$ is said to
be weakly connected, where $\widetilde{G}$ represents the undirected graph
produced by removing the direction of edges of $G$. $G^{-1}$ is the graph formed by reversing the edge direction in $G$, that is $(v, u) \in E(G^{-1})$, only if $(u, v) \in E(G)$.
We also use notation $Y_{f}=\{y\in Y:(y,fy)\in E(G)\}.$

\begin{definition}[\cite{Jachymski2008}]
	\label{Def:definition-9-banach-g-contraction}
	A map $f:Y \rightarrow Y$ is known as Banach $G$-contraction when
\begin{enumerate}[label=\textup{\alph*)}, ref=\alph*]
		\item $f$ conserves edges of graph $G$, that is, if $(u, v) \in E(G)$, then $(fu, fv) \in E(G)$ for all $u, v \in V(G) = Y$.
		\item $f$ shrinks the weights of edges of graph $G$, that is, there exits $k \in (0,1)$ such that for every $u, v \in V(G)$ and $(u,v)\in E(G)$ implies $\gamma(fu,fv)\leq k \gamma(x,y)$.
	\end{enumerate}
\end{definition}
Jachymski in \cite{Jachymski2008} utilized the property $\mathcal{P}$ given as follows: For any sequence $\{y_{n}\}$\ in $Y$, if $\lim\limits_{n \rightarrow \infty}(y_n) = y$ and also
$(y_{n}, y_{n+1})\in E(G)$, then $(y_{n}, y)\in E(G).$
\begin{theorem}[\cite{Jachymski2008}]
	Suppose $(Y, \gamma)$ is a complete metric space and a directed graph $G$ is having property $\mathcal{P}$ with $G$-contraction $f:Y \rightarrow Y$. Then,
	
	\begin{enumerate}[label=\textup{\arabic*)}, ref=\arabic*]
		\item $Y_{f}\neq \emptyset $ if and only if $f$ admits a fixed point.
		\item $f$ is a Picard operator, only if $G$ is weakly connected and $Y_{f}\neq \emptyset$.
		\item $f\mid_{[y]_{\widetilde{G}}}$ is a Picard operator for every $y \in Y_{f}$.
		\item If $f$ is a weakly Picard operator, then $Y_{f} \times Y_{f}\subseteq E(G)$.
	\end{enumerate}
\end{theorem}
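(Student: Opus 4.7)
The plan is to prove the four assertions sequentially, using Picard iteration as the common engine, with property $\mathcal{P}$ carrying edge information to limits and the $G$-contraction estimate shrinking distances along edges.

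For (1), the ``if'' direction is immediate because any fixed point $y^{*}=fy^{*}$ satisfies $(y^{*},fy^{*})=(y^{*},y^{*})\in\Delta\subseteq E(G)$, hence lies in $Y_{f}$. For the ``only if'' direction, fix $y_{0}\in Y_{f}$ and set $y_{n}=f^{n}y_{0}$; edge preservation yields $(y_{n},y_{n+1})\in E(G)$ inductively, and the contraction estimate then gives $\gamma(y_{n},y_{n+1})\leq k^{n}\gamma(y_{0},y_{1})$. A standard telescoping argument using the triangle inequality shows $\{y_{n}\}$ is Cauchy, so by completeness $y_{n}\to y^{*}\in Y$. Property $\mathcal{P}$ now forces $(y_{n},y^{*})\in E(G)$, so $\gamma(y_{n+1},fy^{*})\leq k\,\gamma(y_{n},y^{*})\to 0$; combined with $y_{n+1}\to y^{*}$ this forces $fy^{*}=y^{*}$.

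For (3), fix $y\in Y_{f}$ with Picard limit $y^{*}$ as in step (1). Pick $z\in[y]_{\widetilde{G}}$ and a path $z=x_{0},x_{1},\dots,x_{l}=y$ in $\widetilde{G}$; each consecutive pair is an edge of $G$ or of $G^{-1}$. Since $\gamma$ is symmetric, the $G$-contraction estimate applies in either orientation and yields $\gamma(f^{n}x_{i-1},f^{n}x_{i})\leq k^{n}\gamma(x_{i-1},x_{i})$ for every $i$. The triangle inequality then gives $\gamma(f^{n}z,f^{n}y)\leq k^{n}\sum_{i=1}^{l}\gamma(x_{i-1},x_{i})\to 0$, and since $f^{n}y\to y^{*}$, also $f^{n}z\to y^{*}$; so $f|_{[y]_{\widetilde{G}}}$ is a Picard operator. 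Part (2) is then an immediate corollary: weak connectedness of $G$ forces $[y]_{\widetilde{G}}=Y$ for any $y\in Y_{f}$, so the restriction in (3) is $f$ itself.

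For (4), assume $f$ is a weakly Picard operator and take $u,v\in Y_{f}$. Both orbits $\{f^{n}u\},\{f^{n}v\}$ converge to fixed points and each step lies along edges of $G$ by preservation; the strategy is to combine property $\mathcal{P}$ with edge preservation along these iterates to transfer an edge back to the starting pair $(u,v)$. I expect this to be the main obstacle, because placing an edge at the level of the starting points (rather than at the limits, where $\mathcal{P}$ naturally supplies one) requires careful bookkeeping of which edges are genuinely created by preservation versus which are produced only in the limit by $\mathcal{P}$. A secondary technical nuisance throughout is that the contraction inequality is only asserted along existing edges of $G$, so every distance estimate must be routed through edges already verified by preservation; this is precisely why (1)--(3) weave the two hypotheses together at each step.
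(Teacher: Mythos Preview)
The paper does not contain a proof of this theorem; it is quoted as a cited result from \cite{Jachymski2008} and left unproved, so there is no in-paper argument to compare your proposal against.

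On the substance: your treatment of (1) and (3) is the standard Picard-iteration argument and matches what Jachymski does in the original source; your derivation of (2) from (3) is also the intended route, though note that the paper has written ``only if'' where Jachymski's result is actually the ``if'' direction (weak connectedness and $Y_f\neq\emptyset$ imply $f$ is a Picard operator), which is precisely what you proved.

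For (4) there is a genuine gap, but the obstacle is not the bookkeeping you anticipate: the implication is stated here in the reverse of Jachymski's original. In \cite{Jachymski2008} the assertion is that \emph{if} $Y_f\times Y_f\subseteq E(G)$ \emph{then} $f$ is a weakly Picard operator, and that direction follows easily from (3) because the hypothesis forces every $y\in Y_f$ to lie in a single $\widetilde{G}$-component on which $f$ is already known to be Picard. The converse you are attempting---that a weakly Picard operator must have $Y_f\times Y_f\subseteq E(G)$---is not part of Jachymski's theorem and is in general false, which is why your attempt to ``transfer an edge back to the starting pair'' cannot be completed. Your difficulty is a correct detection that the statement, as printed, is mis-oriented.
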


\noindent Deducing the domain, expanding the range, or expanding the contractive condition of the mappings have all been used to extend the Banach contraction principle. Nadler in \cite{Nadler1969} used $PH$-metric to show the multi-valued type of the Banach contraction principle by supplanting the mapping's range with $CB(X)$, for some arbitrary set $X$.

\begin{definition}[\cite{Nadler1969}]
	\label{Def:definition-10-Nadler-set-valued-contraction}
	Consider the two metric spaces $(X,d_{1})$ and $(Y,d_{2}).$ And $f:X \rightarrow CB(Y)$ is a
	set-valued Lipschitz mapping of $X$ into $Y$ if and only if for any $x,z\in
	X$,
	\begin{equation*}
		H(fx,fz)\leq \alpha d_{1}(x,z),
	\end{equation*}
where $\alpha \geq 0$ is a fixed real number and $H$ is the $PH$-metric. If $\alpha < 1$. The map $f$ is
then referred to as a set-valued contraction.
\end{definition}

\begin{theorem}[\cite{Nadler1969}]
	Assume $(X,d)$ is a complete metric space. If $f:X\rightarrow CB(X)$ is a set-valued contraction,
	then $f$ possess a fixed point.
\end{theorem}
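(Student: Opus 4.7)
The plan is to adapt the classical Picard-iteration strategy for single-valued contractions, using Lemma \ref{Lemma:lemma-2} to manufacture a suitable orbit of selections. Let $\alpha \in [0,1)$ be the Lipschitz constant provided by Definition \ref{Def:definition-10-Nadler-set-valued-contraction}, and fix $\mu > 1$ small enough that $q := \mu\alpha < 1$ (e.g., $\mu = (1+\alpha)/(2\alpha)$ when $\alpha > 0$). Starting from any $x_0 \in X$ and any $x_1 \in fx_0$, I would inductively construct $x_{n+1} \in fx_n$ satisfying $d(x_n, x_{n+1}) \leq \mu\, H(fx_{n-1}, fx_n)$; the existence of such a selection is exactly Lemma \ref{Lemma:lemma-2} applied to $U = fx_{n-1}$, $V = fx_n$, and $u = x_n$.

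Combining this selection bound with the contractive hypothesis gives $d(x_n, x_{n+1}) \leq \mu\alpha\, d(x_{n-1}, x_n) = q\, d(x_{n-1}, x_n)$, so by induction $d(x_n, x_{n+1}) \leq q^n d(x_0, x_1)$. Here I have the full triangle inequality (the hypothesis is that $(X,d)$ is a metric space, not merely a semi-metric space), so for $m > n$ the standard geometric-series estimate
\begin{equation*}
d(x_n, x_m) \leq \sum_{k=n}^{m-1} q^k d(x_0, x_1) \leq \frac{q^n}{1-q}\, d(x_0, x_1)
\end{equation*}
shows that $\{x_n\}$ is Cauchy. Completeness of $(X,d)$ then yields a limit $x^* \in X$.

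The final step is to verify $x^* \in fx^*$. Using the triangle inequality together with Lemma \ref{Lemma:lemma-1-Nadler-analogue},
\begin{equation*}
d(x^*, fx^*) \leq d(x^*, x_{n+1}) + d(x_{n+1}, fx^*) \leq d(x^*, x_{n+1}) + H(fx_n, fx^*) \leq d(x^*, x_{n+1}) + \alpha\, d(x_n, x^*),
\end{equation*}
and the right-hand side tends to $0$ as $n \to \infty$. Since $fx^* \in CB(X)$ is closed, $d(x^*, fx^*) = 0$ forces $x^* \in fx^*$, which is the desired fixed point.

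The main obstacle is the selection step: because $H(fx_{n-1}, fx_n)$ is only a supremum of infima, one cannot in general find $x_{n+1} \in fx_n$ with $d(x_n, x_{n+1}) \leq H(fx_{n-1}, fx_n)$, and the slack factor $\mu > 1$ is unavoidable. Once $\mu$ is chosen so that $\mu\alpha < 1$, the rest is the standard Banach-style iteration, which relies essentially on the triangle inequality and hence does not transfer verbatim to the semi-metric setting developed in the rest of the chapter.
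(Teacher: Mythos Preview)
Your argument is correct and is precisely Nadler's original iteration: choose a slack factor $\mu>1$ with $\mu\alpha<1$, use Lemma~\ref{Lemma:lemma-2} to select $x_{n+1}\in fx_n$ with $d(x_n,x_{n+1})\le \mu\,H(fx_{n-1},fx_n)$, obtain a geometric Cauchy estimate via the triangle inequality, and conclude $x^*\in fx^*$ from closedness of $fx^*$. One cosmetic remark: your parenthetical choice $\mu=(1+\alpha)/(2\alpha)$ is only meant for $\alpha>0$, but since for $\alpha=0$ any $\mu>1$ gives $q=0$, there is no gap.

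As for comparison with the paper: the paper does \emph{not} supply a proof of this theorem. It is quoted from \cite{Nadler1969} as background, immediately after Definition~\ref{Def:definition-10-Nadler-set-valued-contraction}, and the chapter then moves on to its own setting. So there is nothing to compare your approach against beyond noting that it coincides with the classical proof the citation points to.
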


\begin{definition}
	\label{Def:definition-11-path-edge-in-graph}
	Letting $\emptyset \neq V, W \subseteq Y$. Then,
	\begin{enumerate}[label=\textup{\arabic*)}, ref=\arabic*]
		\item when we declare that there exists an edge among $V$ and $W$, this conveys that there exists an edge among $v\in V$ and $w\in W$ that may express as $(V,W)\subseteq E(G).$
		\item when we declare that there is a path between $V$ and $W$, we are referring to a path between $v\in V$ and $w\in W$.
	\end{enumerate}
\end{definition}
We define a relation $\mathcal{R}$ on $CB(Y)$ as follows: $U\mathcal{R}V$ for $U, V\in CB(Y)$ if and only if there is a path between $U$ and $V$. A relation $\mathcal{R}$ on $CB(Y)$ is called transitive, if there is a path between $U$ and $V$ and there is a path between $V$ and $W$, then there is a path between $U$ and $W$. Furthermore, a subset $\mathcal{C} \subseteq CB(Y)$ is said to be complete if for any $U, V \in \mathcal{C}$, there is an edge between $U$ and $V$.
The following is the equivalence class of $U$ for $U\in CB(Y)$
activated by $\mathcal{R}$:
\begin{equation*}
	[U]_{G}=\{V\in CB(Y): U\mathcal{R}V\}.
\end{equation*}
Rather than the mapping $Y \rightarrow Y$ or $Y \rightarrow CB(Y),$ we
consider the mapping $T:CB(Y)\rightarrow CB(Y)$.

The set $Y_{T}$ for a map $T:CB(Y) \rightarrow CB(Y)$ is interpreted as
\begin{equation*}
	Y_{T}=\{U\in CB(Y):(U,T(U))\subseteq E(G)\}.
\end{equation*}

\begin{definition}[\cite{Abbas2015}]
	\label{Def:definition-12-propert-P-star}
We say a graph $G$ has the property $\mathcal{\overset{*}{P}}$
if, for a sequence $\{U_{n}\}$ in $CB(Y)$, $\lim\limits_{n \rightarrow \infty} U_{n} = U$ and
$(U_{n} , U_{n+1})\in E(G)$ for
$n\in \mathbb{N}$, implies that there exists a subsequence $\{U_{n_k}\}$ of
$\{U_{n}\}$ with $(U_{n_{k}}, U)\in E(G)$ for $n\in \mathbb{N}.$
\end{definition}

We present the following graph contractions that will be central to our results, namely,  \emph{generalized rational graph contraction}. These contractions are studied in detail by Abbas and Nazir in \cite{Abbas2015}.

\begin{definition}[\cite{Abbas2015}]
	\label{Def:definition-13-gen-graph-contraction}
	Let $T: CB(Y) \rightarrow CB(Y)$ be a set-valued mapping. Then $T$ is said to be a generalized rational graph $\lambda$-contraction, if the following conditions are satisfied:
	\begin{enumerate}[label=\textup{\arabic*)}, ref=\arabic*]
		\item An edge (path) between $U$ and $V$ implies an edge (path) between $T(U)$ and $T(V)$, for all $U, V \in CB(Y)$.
		\item There exist $\lambda \in [0, 1)$, such that:
		\begin{align*}
			\mathcal{H}_{s}\Big(T(U), T(V)\Big) \leq \lambda M_{T}(U, V) \text{ for all } U, V \in CB(Y),
		\end{align*}
	\end{enumerate}
	\noindent where
	\begin{align}
		M_{T}(U, V) &= \max\Big\{\mathcal{H}_{s}(U, V), \mathcal{H}_{s}(U, T(U)), \mathcal{H}_{s}(V, T(V)), \notag \\
		&\qquad \qquad \frac{\mathcal{H}_{s}(V, T(V))[1 + \mathcal{H}_{s}(U, T(U))]}{1 + \mathcal{H}_{s}(U, V)}, \notag \\
		&\qquad \qquad \frac{\mathcal{H}_{s}(V, T(U))[1 + \mathcal{H}_{s}(U, T(U))]}{1 + \mathcal{H}_{s}(U, V)} \Big\}. \label{semi_eqn01}
	\end{align}
\end{definition}

\section{Main Fixed Point Result on Semi Metric Space}
\begin{theorem}
	\label{main_theorem_3}
	Let $(Y, \mathrm{d}_{s})$ be a $\mathrm{d}_{s}$-Cauchy complete semi-metric space endowed with a directed graph, such that $V(G) = Y$ and $\Delta \subseteq E(G)$. If $T: CB(Y) \rightarrow CB(Y)$ is a generalized rational $\lambda$-contraction mapping and $F(T)=\{U\in CB(Y): T(U)=U \}$, then the following holds:
	\begin{enumerate}[label=\textup{\arabic*)}, ref=\arabic*]
		\item If $F(T) \neq \emptyset$ is complete, then the $PH$-weight assigned to $U, V \in F(T)$ is zero. \label{main01}
		\item If $F(T) \neq \emptyset$, then $Y_{T} \neq \emptyset$. \label{main02}
		\item If $Y_{T} \neq \emptyset$ and the weakly connected graph $\widetilde{G}$ satisfies $\mathcal{\overset{*}{P}}$, then $T$ has a fixed point. \label{main03}
		\item $F(T) \neq \emptyset$ is complete if and only if $F(T)$ is a singleton. \label{main04}
	\end{enumerate}
\end{theorem}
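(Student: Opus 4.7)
The four claims split into one main technical step, part (3), and three short consequences; I would dispatch the routine parts first. For (2), any $U\in F(T)$ satisfies $(U,T(U))=(U,U)\in\Delta\subseteq E(G)$, so $U\in Y_T$ and $Y_T\neq\emptyset$. For (1), completeness of $F(T)$ gives an edge between any $U,V\in F(T)$, and the contraction inequality applies. Substituting $T(U)=U$ and $T(V)=V$ in \eqref{semi_eqn01} kills the self-distance terms, and the surviving fraction $\mathcal{H}_s(V,U)/(1+\mathcal{H}_s(U,V))$ is bounded by $\mathcal{H}_s(U,V)$, so $M_T(U,V)=\mathcal{H}_s(U,V)$. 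The contraction then yields $\mathcal{H}_s(U,V)\leq\lambda\mathcal{H}_s(U,V)$, forcing $\mathcal{H}_s(U,V)=0$. Part (4) follows: the ``only if'' direction combines (1) with the fact that $\mathcal{H}_s$ separates points on $CB(Y)$, and the converse is automatic from $\Delta\subseteq E(G)$.

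For part (3), I would form the Picard iterates $U_{n+1}=T(U_n)$ starting from $U_0\in Y_T$. The edge-preservation clause of Definition~\ref{Def:definition-13-gen-graph-contraction} and induction give $(U_n,U_{n+1})\in E(G)$ for every $n$. Applying the contraction to $(U_{n-1},U_n)$ and noting $\mathcal{H}_s(U_n,U_n)=0$ collapses $M_T(U_{n-1},U_n)$ to $\max\{\mathcal{H}_s(U_{n-1},U_n),\mathcal{H}_s(U_n,U_{n+1})\}$; the second alternative would force $\mathcal{H}_s(U_n,U_{n+1})=0$ and already produce a fixed point, so in the interesting case
\[
\mathcal{H}_s(U_n,U_{n+1})\leq\lambda\mathcal{H}_s(U_{n-1},U_n)\leq\cdots\leq\lambda^n\mathcal{H}_s(U_0,U_1).
\]

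The main obstacle is upgrading this vanishing of consecutive distances to $\mathrm{d}_s$-Cauchy-ness of $\{U_n\}$ in $(CB(Y),\mathcal{H}_s)$ \emph{without} a triangle inequality. I would follow the contradiction strategy of Jachymski~\cite{Jachymski1995} that underlies Lemma~\ref{Lemma:lemma-4}: suppose $\epsilon>0$ and indices $n_k<m_k$, with $m_k$ chosen minimal subject to $\mathcal{H}_s(U_{n_k},U_{m_k})\geq\epsilon$; applying the contraction to $(U_{n_k-1},U_{m_k-1})$ and invoking properties (c) and (e) of Definition~\ref{Def:definition-2} together with the geometric decay above should squeeze $\epsilon\leq\lambda\epsilon$. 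Once Cauchy, $\mathrm{d}_s$-Cauchy completeness yields $U_n\to U^*\in CB(Y)$. Applying property $\mathcal{\overset{*}{P}}$ of $\widetilde{G}$ to $\{U_n\}$ extracts a subsequence $\{U_{n_k}\}$ edge-related to $U^*$, and a final application of the contraction gives $\mathcal{H}_s(U_{n_k+1},T(U^*))\leq\lambda M_T(U_{n_k},U^*)$. Letting $k\to\infty$, the right-hand side reduces to $\lambda\mathcal{H}_s(U^*,T(U^*))$ while property (e) identifies the left-hand side with $\mathcal{H}_s(U^*,T(U^*))$, forcing $T(U^*)=U^*$.
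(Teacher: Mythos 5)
Your parts \ref{main01}), \ref{main02}) and \ref{main04}) are correct and essentially identical to the paper's argument (substitute $T(U)=U$, $T(V)=V$ into $M_{T}$, note the rational term is at most $\mathcal{H}_{s}(U,V)$, and get $\mathcal{H}_{s}(U,V)\leq\lambda\mathcal{H}_{s}(U,V)$; loops in $\Delta\subseteq E(G)$ give the rest). Likewise the opening of part \ref{main03}) (Picard iterates, edge preservation, collapse of $M_{T}(U_{n-1},U_{n})$ to $\max\{\mathcal{H}_{s}(U_{n-1},U_{n}),\mathcal{H}_{s}(U_{n},U_{n+1})\}$, geometric decay $\lambda^{n}\mathcal{H}_{s}(U_{0},U_{1})$) and the closing step (property $\mathcal{\overset{*}{P}}$, passing to the limit in $\mathcal{H}_{s}(T^{n_k+1}(U),T(U^{*}))\leq\lambda M_{T}(T^{n_k}(U),U^{*})$ to force $T(U^{*})=U^{*}$, then uniqueness as in part \ref{main01})) follow the same route as the paper, which handles the limit by a case analysis on which term of $M_{T}$ realizes the maximum.

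The genuine gap is exactly the step you flag as the main obstacle: the passage from $\mathcal{H}_{s}(U_{n},U_{n+1})\leq\lambda^{n}\mathcal{H}_{s}(U_{0},U_{1})$ to $\{U_{n}\}$ being $\mathrm{d}_{s}$-Cauchy is not carried out, and the plan you sketch would not go through as stated. The Jachymski-style minimal-index argument needs quantitative comparisons of the form ``$\mathcal{H}_{s}(U_{n_k},U_{m_k})$ and $\mathcal{H}_{s}(U_{n_k-1},U_{m_k-1})$ both tend to $\epsilon$,'' and in a metric space these come from the triangle inequality applied to $U_{m_k-1}$ and $U_{n_k-1}$; here no triangle inequality is available, and properties c) and e) of Definition \ref{Def:definition-2} are purely sequential statements (hypotheses of the form $\mathrm{d}_{s}(u_{n},v_{n})\to 0$ or $\mathrm{d}_{s}(u_{n},u)\to 0$) which give no control on $\mathcal{H}_{s}(U_{n_k-1},U_{m_k-1})$ when the distances stay of size $\epsilon$; moreover they are stated for $\mathrm{d}_{s}$ on $Y$, not verified for $\mathcal{H}_{s}$ on $CB(Y)$. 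So the squeeze $\epsilon\leq\lambda\epsilon$ does not follow from what you invoke, and the term $\mathcal{H}_{s}(U_{n_k-1},U_{m_k-1})$ inside $M_{T}$ remains uncontrolled. The paper takes a different (if terse) route here: it bounds $\mathcal{H}_{s}\big(T^{n}(U),T^{n+m}(U)\big)\leq\lambda^{n}\mathcal{H}_{s}(U,U_{m})$ by iterating the contraction along the pair and then lets $n\to\infty$, which avoids any triangle-type estimate but implicitly uses boundedness of the orbit (so that $\sup_{m}\mathcal{H}_{s}(U,U_{m})<\infty$), in the spirit of the bounded-space hypothesis of Lemma \ref{Lemma:lemma-4}. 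To repair your proof you should either adopt that direct iterated estimate (and justify why the maximum in $M_{T}$ is realized by $\mathcal{H}_{s}$ at each iteration, together with the boundedness needed for $\lambda^{n}\mathcal{H}_{s}(U,U_{m})\to 0$), or add an explicit triangle-type/regularity assumption on $\mathcal{H}_{s}$ strong enough to run the minimal-index argument.
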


\begin{proof} \smartqed \ref{main01}) Suppose otherwise, that there exists $U, V \in F(T)$ such that $\mathcal{H}_{s}(U, V) > 0$. Now, since $T$ is a generalized $\lambda$-contraction, it follows that
	\begin{equation}
		\mathcal{H}_{s}(U, V) = \mathcal{H}_{s}(T(U), T(V)) \leq \lambda M_{T}(U, V)  \label{semi_eqn02}
	\end{equation}
for some $\lambda \in (0, 1)$, where
	\begin{align*}
		M_{T}(U, V) &= \max\Big\{\mathcal{H}_{s}(U, V), \mathcal{H}_{s}(U, T(U)), \mathcal{H}_{s}(V, T(V)), \notag \\
		&\qquad \qquad \frac{\mathcal{H}_{s}(V, T(V))[1 + \mathcal{H}_{s}(U, T(U))]}{1 + \mathcal{H}_{s}(U, V)}, \notag \\
		&\qquad \qquad \frac{\mathcal{H}_{s}(V, T(U))[1 + \mathcal{H}_{s}(U, T(U))]}{1 + \mathcal{H}_{s}(U, V)} \Big\} \\
		&= \mathcal{H}_{s}(U, V).
	\end{align*}
The above inequality (\ref{semi_eqn02}) implies
	\begin{align*}
		\mathcal{H}_{s}(U, V) \leq \lambda \mathcal{H}_{s}(U, V).
	\end{align*}
	This is a contraction. Hence our result follows.
		
	\noindent \ref{main02}) Let $U \in F(T)$, then since $(u , u) \subseteq E(G)$ for all $u \in V(G)$. That is  $(U, U) = (U, T(U)) \subseteq E(G)$. Therefore, we have  $U \in CB(Y)$ such that $(U, T(U)) \subseteq E(G)$. Hence, $U \in Y_{T}$.
	
	\noindent \ref{main03}) Let $U \in Y_{T} \neq \emptyset$. Then since $U \in CB(Y)$, and $\widetilde{G}$ is weakly connected, it follows that $CB(Y) \subseteq [U]_{\widetilde{G}} = \mathrm{P}(Y)$, a nonempty power set on $Y$. Since $T$ is a self-map and the equivalence class satisfies the transitive property on $CB(Y)$, we have $T(U) \in [U]_{\widetilde{G}}$. As such by a similar argument, we have $T(U_{i}) \in [U]_{\widetilde{G}}$ for each $U_{i} \in [U]_{\widetilde{G}}$. By definition of $T$, we have $\big(T^{n}(U), T^{n+1}(U)\big) \in E(\widetilde{G})$ for all $n \in \mathbb{N}$.
We now define a recursive iterative sequence, as follows:
	\begin{align*}
		U & = U_{0}, \\
		T(U_{0}) & = U_{1}, \\
		T^{2}(U_{0}) & = T(U_{1}) = U_{2}, \\
		& \cdots \\
		T^{n}(U_{0}) & = T(U_{n-1}) = U_{n}.
	\end{align*}
We assume that  $U_{n+1} \neq U_n$ for all $ n \in \{0,1,2,\dots\}$. In case  $U_{k+1} =  U_k$ for some $k$, then $ T(U_k) =U_{k+1}= U_{k}$, that is, $U_k$ is the fixed point of $T$. 	
	Since $\widetilde{G}$ is weakly connected, then there exists a sequence $\{y_{i}\}$ such that $(y_{i}, y_{i+1}) \in E(\widetilde{G})$, for $i = 0, \dots, n-1$ such that $y_{i} \in U_{i}$ for $i = 0,  \dots, n$. Again by definition of $T$ we have
\begin{equation}
\begin{array}{ll}
		\mathcal{H}_{s}\Big(T^{n}(U), T^{n+1}(U) \Big)  = \mathcal{H}_{s}\big(U_{n}, U_{n+1}\big) & = \mathcal{H}_{s}\big(T(U_{n-1}), T(U_{n})\big)  \\
& \leq \lambda \Big(M_{T}(U_{n-1}, U_{n})\Big)
	\end{array}
\label{semi_eqn03}
\end{equation}
for some $\lambda \in [0, 1)$, where,
\begin{align*}
M_{T}(U_{n-1}, U_{n}) &= \max\Big\{H(U_{n-1}, U_{n}), H(U_{n-1}, T(U_{n-1})), H(U_{n}, T(U_{n})), \\
		&\qquad \qquad \frac{H(U_{n}, T(U_{n})[1 + H(U_{n-1},  T(U_{n-1}))])}{1 + H(U_{n-1}, U_{n})}, \\ &\qquad \qquad \frac{H(U_{n}, T(U_{n-1}))[1 + H(U_{n-1},  T(U_{n-1}))])}{1 + H(U_{n-1}, U_{n})}\Big\}.
\end{align*}
Applying the mapping, we get
	\begin{align*}
		M_{T}(U_{n-1}, U_{n}) &= \max\Big\{\mathcal{H}_{s}(U_{n-1}, U_{n}), \mathcal{H}_{s}(U_{n-1}, U_{n}), \mathcal{H}_{s}(U_{n}, U_{n+1}), \\
		&\qquad \qquad \frac{\mathcal{H}_{s}(U_{n}, U_{n+1})[1 + \mathcal{H}_{s}(U_{n-1},  U_{n})])}{1 + \mathcal{H}_{s}(U_{n-1}, U_{n})}, \\ &\qquad \qquad \frac{\mathcal{H}_{s}(U_{n}, U_{n})[1 + \mathcal{H}_{s}(U_{n-1},  U_{n})])}{1 + \mathcal{H}_{s}(U_{n-1}, U_{n})}\Big\} \\
		&= \max\{\mathcal{H}_{s}(U_{n-1}, U_{n}), \mathcal{H}_{s}(U_{n}, U_{n+1})\}.
	\end{align*}
	\noindent Now, if $M_{T}(U, V) = \mathcal{H}_{s}(U_{n}, U_{n+1})$, then by inequality (\ref{semi_eqn03}), we have a contradiction. Therefore the only value $M_{T}(U_{n-1}, U_{n})$ can obtain is $\mathcal{H}_{s}(U_{n-1}, U_{n})$. It now follows that
	\begin{align*}
		\mathcal{H}_{s}\Big(T^{n}(U), T^{n+1}(U) \Big) & = \mathcal{H}_{s}\big(U_{n}, U_{n+1}\big)  = \mathcal{H}_{s}\big(T(U_{n-1}), T(U_{n})\big) \\
		& \leq \lambda \mathcal{H}_{s}\big(U_{n-1}, U_{n} \big) \\
		& = \lambda \mathcal{H}_{s}\big(T(U_{n-2}), T(U_{n-1}) \big) \\
		& \leq \lambda^{2}\mathcal{H}_{s}(U_{n-2}, U_{n-1}) \\
		\cdots & \leq \lambda^{n}\mathcal{H}_{s}(U_{0}, U_{1}) = \lambda^{n}\mathcal{H}_{s}(U, T(U)).
	\end{align*}
	\noindent Now, for $m, n \in \mathbb{N}$,
	\begin{align*}
		\mathcal{H}_{s}\Big(T^{n}(U), T^{m + n}(U) \Big) & \leq \lambda^{n} \mathcal{H}_{s}(U_{0}, U_{m}) = \lambda^{n} \mathcal{H}_{s}(U, U_{m}).
	\end{align*}
Taking the limit $n \rightarrow \infty$ yields $\lambda^{n} \mathcal{H}_{s}(U, U_{m}) \rightarrow 0$. Therefore, $\{T^{n}(U)\}$ is a $\mathrm{d}_{s}$-Cauchy sequence in $CB(Y)$. And since $(Y, d)$ is $\mathrm{d}_{s}$-Cauchy complete, there exists $U^{*}$ such that $\lim\limits_{n \rightarrow \infty} T^{n}(U) = U^{*}$.
	
	\noindent Now, we show that $U^{*}$ is a fixed point of $T$. Since, $\{T^{n}\}$ with $\lim\limits_{n \rightarrow \infty} T^{n}(U) = U^{*}$ is a $\mathrm{d}_{s}$-Cauchy sequence and by our assumption we have  $(T^{n}(U), T^{n+1}(U)) \in E(\tilde{G})$ for every $n \in \mathbb{N}$. Therefore, by property $\mathcal{\overset{*}{P}}$, we have a subsequence $\{T^{n_k}(U)\}$ of $\{T^{n}(U)\}$ such there is  an edge between $T^{n_k}(U)$ and $U^{*}$ for all $n \in \mathbb{N}$. It follows that
	\begin{align*}
		\mathcal{H}_{s}\Big(T(U^{*}), T^{n_k}(U^{*}) \Big) & \leq \lambda M_{T}(U^{*}, T^{n_k - 1}(U^{*}))
	\end{align*}
for some $\lambda \in [0, 1)$. Note that
$M_{T}(U^{*}, T^{n_k - 1}(U^{*})) = M_{T}(T^{n_k - 1}(U^{*}) ,U^{*})$, where
	\begin{align*}
		& M_{T}(T^{n_k - 1}(U^{*}), U^{*}) \\
& = \max\Big\{\mathcal{H}_{s}(T^{n_k - 1}(U^{*}), U^{*}), \mathcal{H}_{s}(T^{n_k - 1}(U^{*}), T(T^{n_k - 1}(U^{*}))), \mathcal{H}_{s}(U^{*}, T(U^{*})), \\
		&\qquad \qquad \frac{\mathcal{H}_{s}(U^{*}, T(U^{*}))[1 + \mathcal{H}_{s}(T^{n_k - 1}(U^{*}), T(T^{n_k - 1}(U^{*})))]}{1 + \mathcal{H}_{s}(T^{n_k - 1}(U^{*}), U^{*})},
		\\&\qquad \qquad \frac{\mathcal{H}_{s}(U^{*}, T(T^{n_k - 1}(U^{*})))[1 + \mathcal{H}_{s}(T^{n_k - 1}(U^{*}), T(T^{n_k - 1}(U^{*})))]}{1 + \mathcal{H}_{s}(T^{n_k - 1}(U^{*}), U^{*})} \Big\}.
	\end{align*}
Applying the mapping $T$, we get
	\begin{align*}
&		M_{T}(U^{*}, T^{n_k - 1}(U^{*})) \\
&= \max\Big\{\mathcal{H}_{s}(T^{n_k - 1}(U^{*}), U^{*}),  \mathcal{H}_{s}(T^{n_k - 1}(U^{*}), T^{n_k}(U^{*})),  \mathcal{H}_{s}(U^{*}, T(U^{*})), \\
		&\qquad \qquad  \frac{\mathcal{H}_{s}(U^{*}, T(U^{*}))[1 + \mathcal{H}_{s}(T^{n_k - 1}(U^{*}), T^{n_k}(U^{*}))]}{1 + \mathcal{H}_{s}(T^{n_k - 1}(U^{*}), U^{*})},
		\\ &\qquad \qquad \frac{\mathcal{H}_{s}(U^{*}, T^{n_k}(U^{*}))[1 + \mathcal{H}_{s}(T^{n_k - 1}(U^{*}), T(T^{n_k - 1}(U^{*})))]}{1 + \mathcal{H}_{s}(T^{n_k - 1}(U^{*}), U^{*})} \Big\}.
	\end{align*}
We now consider the following cases.
	
	\noindent \textbf{Case 1}: If $M_{T}(U^{*}, T^{n_k - 1}(U^{*})) = \mathcal{H}_{s}(T^{n_k - 1}(U^{*}), U^{*})$, taking the limit for $n \rightarrow \infty$ yields
	\begin{align*}
		\lim\limits_{n \rightarrow \infty} \mathcal{H}_{s}\Big(T^{n_k}(U^{*}), T(U^{*}) \Big) &\leq \lambda \lim\limits_{n \rightarrow \infty} \mathcal{H}_{s}(T^{n_k - 1}(U^{*}), U^{*}) \\
		&=\lambda \mathcal{H}_{s}(U^{*}, U^{*}) = 0.
	\end{align*}
	Hence, we must have $\lim\limits_{n \rightarrow \infty}\mathcal{H}_{s}\Big(T^{n_k}(U^{*}), T(U^{*}) \Big) = 0$, that is $T(U^{*}) = U^{*}$. \newline
	
	\noindent \textbf{Case 2}: If $M_{T}(U^{*}, T^{n_k - 1}(U^{*})) = \mathcal{H}_{s}(T^{n_k - 1}(U^{*}), T^{n_k}(U^{*}))$, taking the limit for $n \rightarrow \infty$ yields
	\begin{align*}
		\lim\limits_{n \rightarrow \infty} \mathcal{H}_{s}\Big(T(U^{*}), T^{n_k}(U^{*}) \Big) &= \mathcal{H}_{s}(U^{*}, T(U^{*})) \\
		& \leq \lambda \lim\limits_{n \rightarrow \infty} \mathcal{H}_{s}(T^{n_k - 1}(U^{*}), T^{n_k}(U^{*})) \\
		& = \lambda \mathcal{H}_{s}(U^{*}, U^{*}) = 0.
	\end{align*}
We must have $\mathcal{H}_{s}(U^{*}, T(U^{*})) = 0$, hence $T(U^{*}) = U^{*}$. \newline
	
	\noindent \textbf{Case 3}: If $M_{T}(U^{*}, T^{n_k - 1}(U^{*})) =  \mathcal{H}_{s}(U^{*}, T(U^{*}))$ taking the limit for $n \rightarrow \infty$ yields
	\begin{align*}
		\lim\limits_{n \rightarrow \infty} \mathcal{H}_{s}\Big(T(U^{*}), T^{n_k}(U^{*}) \Big) &= \mathcal{H}_{s}(U^{*}, T(U^{*})) \leq \lambda \lim\limits_{n \rightarrow \infty} \mathcal{H}_{s}(U^{*}, T(U^{*})) \\
		& \leq \lambda \mathcal{H}_{s}(U^{*}, T(U^{*})).
	\end{align*}
	Therefore $\mathcal{H}_{s}(U^{*}, T(U^{*})) = 0$, and we conclude:  $T(U^{*}) = U^{*}$. \newline
	
	\noindent \textbf{Case 4}: If $M_{T}(U^{*}, T^{n_k - 1}(U^{*})) = \frac{\mathcal{H}_{s}(U^{*}, T(U^{*}))[1 + \mathcal{H}_{s}(T^{n_k - 1}(U^{*}), T^{n_k}(U^{*}))]}{1 + \mathcal{H}_{s}(T^{n_k - 1}(U^{*}), U^{*})}$, taking the limit $n \rightarrow \infty$ yields
	\begin{align*}
		\lim\limits_{n \rightarrow \infty} \mathcal{H}_{s}\Big(T(U^{*}), T^{n_k}(U^{*}) \Big)  &\leq \lambda \lim\limits_{n \rightarrow \infty}\frac{\mathcal{H}_{s}(U^{*}, T(U^{*}))[1 + \mathcal{H}_{s}(T^{n_k - 1}(U^{*}), T^{n_k}(U^{*}))]}{1 + \mathcal{H}_{s}(T^{n_k - 1}(U^{*}), U^{*})} \\
		&=\lambda \mathcal{H}_{s}(U^{*}, T(U^{*})).
	\end{align*}
	\noindent As in \emph{Case 3}, we have   $T(U^{*}) = U^{*}$. \newline
	
	\noindent \textbf{Case 5}: If $M_{T}(U^{*}, T^{n_k - 1}(U^{*})) = \frac{\mathcal{H}_{s}(U^{*}, T^{n_k}(U^{*}))[1 + \mathcal{H}_{s}(T^{n_k - 1}(U^{*}), T(T^{n_k - 1}(U^{*})))]}{1 + \mathcal{H}_{s}(T^{n_k - 1}(U^{*}), U^{*})}$, taking the limit for $n \rightarrow \infty$ yields
\begin{align*}
		& \lim\limits_{n \rightarrow \infty} \mathcal{H}_{s}\Big(T(U^{*}), T^{n_k}(U^{*}) \Big) \\
&\leq \lambda \lim\limits_{n \rightarrow \infty} \frac{\mathcal{H}_{s}(U^{*}, T^{n_k}(U^{*}))[1 + \mathcal{H}_{s}(T^{n_k - 1}(U^{*}), T(T^{n_k - 1}(U^{*})))]}{1 + \mathcal{H}_{s}(T^{n_k - 1}(U^{*}), U^{*})} \\
		&=\lambda(0) = 0.
	\end{align*}
As in \emph{Case 3}, we conclude   $T(U^{*}) = U^{*}$. Therefore, $U^{*} \in F(T)$.
We now show that the fixed point $U^{*}$ is unique. Suppose by way of contradiction that we have $U^{*}, V^{*} \in F(T)$ and let $\mathcal{H}_{s}(U^{*}, V^{*}) > 0$. Then by a similar argument to (\ref{main01}) of Theorem \ref{main_theorem_3}, we have  $\mathcal{H}_{s}(U^{*}, V^{*}) \leq \lambda \mathcal{H}_{s}(U^{*}, V^{*})$, whence $\mathcal{H}_{s}(U^{*}, V^{*}) = 0$ implying $U^{*} = V^{*}$. We also conclude that $|F(T)| = 1$.

	\noindent \ref{main04}) Letting $F(T) \neq \emptyset$ be complete, then by (\ref{main01}) of Theorem \ref{main_theorem_3}, we conclude that $F(T)$ is a singleton. This proves the necessity. Proving the sufficiency, let $F(T)$ be a singleton with $U \in F(T)$, then by (\ref{main02}) of Theorem \ref{main_theorem_3}, we have  $U \in Y_{T}$, hence $F(T)$ is complete.
\qed \end{proof}

\noindent To illustrate the above result, we provide an Example with respect to a semi-metric space $(Y, \mathrm{d}_{s})$.
\begin{example}
Let $Y = \{0, 1, 4\} = V(G)$ and equipped with semi-metric $\mathrm{d}_{s}: Y \times Y \rightarrow \mathbb{R^{+}}$ where
	$
		\mathrm{d}_{s}(x - y) = (x - y)^{2}.
	$
	Clearly the semi-metric $\mathrm{d}_{s}$ does not satisfy the triangle inequality.
Let the edge set be defined as
	\begin{align*}
		E(G) = \{(0, 0), (1, 1), (4, 4), (0, 1), (0, 4), (1, 4)\}.
	\end{align*}
Note that the sets $\{0\}, \{0, 1\} \text{ and } \{0, 4\}$ are all bounded and closed. We demonstrate that this is so for the set $\{0, 4\}$, the others follow a similar argument.
\begin{align*}
y \in \overline{\{0, 4\}} &\Leftrightarrow d(y, \{0, 4\}) = 0 \Leftrightarrow \min\{d(y, 0), d(y, 4)\} = 0 \Leftrightarrow y \in \{0, 4\}.
\end{align*}
Hence, $\{0, 4\}$ is closed with respect to the semi-metric $\mathrm{d}_{s}$. Boundedness is clear.

The $K_{3}$ graph  with $V(G)$ and $E(G)$ above is shown in Figure \ref{fig:k3_graph1}, with the Pompeiu-Hausdorff weight, as defined below.
\begin{figure}[h]
		\centering
		\begin{tikzpicture}
			\tikzstyle{vertex} = [circle, fill=black!10]
			\tikzstyle{edge} = [->, very thick]			
			\node[vertex](v1) at (0, 0) {$\bar{0}$};
			\node[vertex](v2) at (5, 0) {$\bar{1}$};
			\node[vertex](v3) at (2.5, -3) {$\bar{4}$};			
			\draw[edge] (v1) to [out=200,in=90,looseness=9] (v1);
			\draw[edge] (v2) to [out=350,in=90,looseness=9] (v2);
			\draw[edge] (v3) to [out=350,in=270,looseness=9] (v3);
			\draw[edge] (v1)--(v2);
			\draw[edge] (v1)--(v3);
			\draw[edge] (v2)--(v3);		
			\draw[-] (v1) edge  node[pos=0.5, above] {$\mathcal{H}_{s}(\bar{0}, \bar{1}) = 1$} (v2);
			\draw[-] (v1) edge  node[pos=0.5, left] {$16 = \mathcal{H}_{s}(\bar{0}, \bar{4})$} (v3);
			\draw[-] (v2) edge  node[pos=0.5, right] {$\mathcal{H}_{s}(\bar{1}, \bar{4}) = 9$} (v3);
		\end{tikzpicture}
		\caption{$K_{3}$ - Pompeiu-Hausdorff Weighted Graph with Loops} \label{fig:k3_graph1}
	\end{figure}
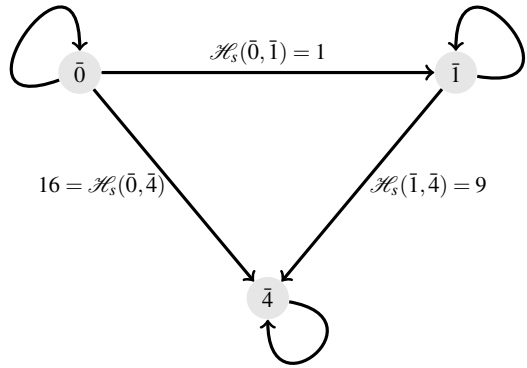

We use the following notation $\bar{0} = \{0\}, \bar{1} = \{0, 1\}$ and $\bar{4} = \{0, 4\}$, where, $CB(X) = \{\bar{0}, \bar{1}, \bar{4}\}$. We now employ the $PH$-metric to calculate the distances between the elements of $CB(Y)$.
	\begin{align*}
		\mathcal{H}_{s}(U, V) = \begin{cases}
			0 \text{ if } U = V = \bar{0}, U = V = \bar{1} \text{ or } U = V = \bar{4} \\
			1 \text{ if } U = \bar{0} \text{ and } V = \bar{1} \\
			9 \text{ if } U = \bar{1} \text{ and } V = \bar{4} \\
			16 \text{ if } U = \bar{0} \text{ and } V = \bar{4}.
		\end{cases}
	\end{align*}
We now define the mapping $T: CB(Y) \rightarrow CB(Y)$, as follows
	\begin{align*}
		T(V) = \begin{cases}
			\bar{0} \text{ if } V \in \{\bar{0}, \bar{1} \}\\
			\bar{1} \text{ if } V = \bar{4}.
		\end{cases}
	\end{align*}
Notice, that between any two elements $U, V \in CB(Y)$ there is  an edge (path) between them, and an edge (path) between $T(U)$ and $T(V)$.

We now consider the following cases for the elements of $CB(Y)$.
	
	\noindent \textbf{Case 1}: If $U, V \in \{\bar{0},  \bar{1} \}$ or $U = V = \bar{4}$, then $\mathcal{H}_{s}(T(U), T(V)) = 0$.\newline
	
	\noindent \textbf{Case 2}: If $U \in \{\bar{0}, \bar{1} \}$ and $V = \bar{4}$, then we have two sub-cases, as follows:
\begin{enumerate}[label=\textup{(\roman*)}, ref=\textup{(\roman*)}]
\item Whenever $\lambda \in (\frac{1}{16}, 1)$,
	\begin{align*}
		\mathcal{H}_{s}(T(\bar{0}), T(\bar{4})) = \mathcal{H}_{s}(\bar{0}, \bar{1}) = 1 < \lambda \mathcal{H}_{s}(\bar{0}, \bar{4}) = 16\lambda;
	\end{align*}
\item Whenever $\lambda \in (\frac{1}{9}, 1)$,
	\begin{align*}
		\mathcal{H}_{s}(T(\bar{1}, \bar{4})) &= \mathcal{H}_{s}(\bar{0}, \bar{1}) = 1 < \lambda \mathcal{H}_{s}(\bar{1}, \bar{4}) = 9\lambda.
	\end{align*}
\end{enumerate}
Thus all requirements of Theorem \ref{main_theorem_3} are satisfied, with $T(\bar{0}) = \bar{0}$ and $F(T)$ is complete.
\end{example}

\begin{corollary}
	Let $(Y, \mathrm{d}_{s})$ be a $\mathrm{d}_{s}$-Cauchy complete semi-metric space equipped with a directed graph $G$ such that $V(G) = Y$ and $\Delta \subseteq E(G)$. If $G$ is a weakly connected, then the generalized rational graph $\lambda$-contraction mapping $T : CB(Y) \rightarrow CB(Y)$ with $(U, V) \subseteq E(G)$ where $V \in T(U)$, attains a fixed point.
\end{corollary}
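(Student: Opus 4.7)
The plan is to deduce the corollary from Theorem \ref{main_theorem_3}, part (\ref{main03}), which already produces a fixed point for any generalized rational $\lambda$-contraction $T:CB(Y)\to CB(Y)$ provided that $Y_T\neq\emptyset$, the semi-metric space is $\mathrm{d}_s$-Cauchy complete, and $\widetilde{G}$ is weakly connected with property $\mathcal{\overset{*}{P}}$. Completeness and weak connectedness of $G$ (hence of $\widetilde{G}$) are given in the hypotheses, so the substantive step is to produce an element of $Y_T$ from the edge assumption on $T$.

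First I would unpack the condition ``$(U,V)\subseteq E(G)$ with $V\in T(U)$'' using Definition \ref{Def:definition-11-path-edge-in-graph}: this is precisely the assertion that there is an edge between some vertex of $U$ and some vertex of the set $T(U)\in CB(Y)$, which under the same convention reads $(U,T(U))\subseteq E(G)$. Therefore $U\in Y_T$, giving $Y_T\neq\emptyset$. Combined with $\Delta\subseteq E(G)$ and the edge-preserving clause of Definition \ref{Def:definition-13-gen-graph-contraction}, the iterates $T^{n}(U)$ remain connected by edges, so the iterative scheme used inside the proof of Theorem \ref{main_theorem_3}(\ref{main03}) constructs a $\mathrm{d}_s$-Cauchy sequence $\{T^{n}(U)\}$ in $CB(Y)$; by $\mathrm{d}_s$-Cauchy completeness it converges to some $U^{*}\in CB(Y)$.

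Finally, one invokes property $\mathcal{\overset{*}{P}}$ to extract a subsequence $\{T^{n_k}(U)\}$ that shares an edge with $U^{*}$, and the five-case analysis from the proof of Theorem \ref{main_theorem_3}(\ref{main03}) forces $T(U^{*})=U^{*}$. The main obstacle I anticipate is that the corollary as stated does not explicitly assume property $\mathcal{\overset{*}{P}}$, so either I would add it as a tacit assumption inherited from the theorem, or argue that weak connectedness together with $\Delta\subseteq E(G)$ and the contractive estimate already suffice to guarantee the required edge between some tail of the iterates and their limit. Once this last point is settled, Theorem \ref{main_theorem_3}(\ref{main03}) directly yields a fixed point of $T$, completing the proof of the corollary.
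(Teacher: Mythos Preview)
Your approach is exactly what the paper intends: the corollary is stated in the paper without any accompanying proof, so it is meant to be read as an immediate specialization of Theorem~\ref{main_theorem_3}(\ref{main03}). Your reading of the hypothesis ``$(U,V)\subseteq E(G)$ with $V\in T(U)$'' as furnishing an element $U\in Y_T$ is the correct way to feed the corollary into that theorem, and the rest of your outline simply replays the iterative argument already carried out there.

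Your observation about property~$\mathcal{\overset{*}{P}}$ is also on target and is not something you need to ``argue around'': the corollary as printed in the paper omits this hypothesis, but the only tool available to conclude $T(U^{*})=U^{*}$ is precisely the five-case analysis of Theorem~\ref{main_theorem_3}(\ref{main03}), which relies on~$\mathcal{\overset{*}{P}}$ to obtain the edge between a subsequence of iterates and the limit. Weak connectedness and $\Delta\subseteq E(G)$ alone do not manufacture such an edge, so treat~$\mathcal{\overset{*}{P}}$ as a tacit standing assumption inherited from the theorem rather than attempting an independent justification.
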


\begin{corollary}
	Let $(Y, \mathrm{d}_{s})$ be an $\epsilon$-chainable space for some $\epsilon > 0$, the mapping $T : CB(Y) \rightarrow CB(Y)$ and $\lambda \in [0, 1)$ with $\mathcal{H}_{s}(U, V) < \epsilon$. Then $T$ has a fixed point, whenever, $\mathcal{H}_{s}(T(U), T(V)) \leq \lambda M_{T}(\mathcal{H}_{s}(U, V))$ for all $U, V \in CB(Y)$.
\end{corollary}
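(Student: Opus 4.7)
The strategy is to reduce the $\epsilon$-chainable setting to the graph-theoretic framework of Theorem~\ref{main_theorem_3}. I would endow $Y$ with the directed graph $G = (V(G), E(G))$ defined by $V(G) = Y$ and $(u, v) \in E(G) \Longleftrightarrow \mathrm{d}_{s}(u, v) < \epsilon$. Since $\mathrm{d}_{s}(u, u) = 0 < \epsilon$ for every $u$, the diagonal lies in $E(G)$, and the $\epsilon$-chainability of $(Y, \mathrm{d}_{s})$ is exactly the statement that $G$ is connected; hence $\widetilde{G}$ is weakly connected. Edges between sets $U, V \in CB(Y)$ are then interpreted via Definition~\ref{Def:definition-11-path-edge-in-graph}, and Lemma~\ref{Lemma:lemma-3-Jachymski1995} serves as the bridge to $\mathcal{H}_{s}$-proximity: whenever $\mathcal{H}_{s}(U, V) < \epsilon$ every $u \in U$ admits some $v \in V$ with $(u, v) \in E(G)$, so $(U, V) \subseteq E(G)$.

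I then verify that $T$ satisfies Definition~\ref{Def:definition-13-gen-graph-contraction} relative to $G$. The contractive estimate is precisely the standing hypothesis applied to pairs with $\mathcal{H}_{s}(U, V) < \epsilon$. For edge-preservation, if $(U, V) \subseteq E(G)$ arises from $\mathcal{H}_{s}(U, V) < \epsilon$, the hypothesis gives $\mathcal{H}_{s}(T(U), T(V)) \leq \lambda M_{T}(U, V)$; an induction along the orbit $\{T^{n}(U_{0})\}$ using the geometric-decay argument from the proof of Theorem~\ref{main_theorem_3}(\ref{main03}) shows that every term in $M_{T}$ stays inside the $\epsilon$-regime, whence $\mathcal{H}_{s}(T(U), T(V)) < \epsilon$ and another application of Lemma~\ref{Lemma:lemma-3-Jachymski1995} produces the required set-edge between $T(U)$ and $T(V)$. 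Property $\mathcal{\overset{*}{P}}$ is then immediate: if $\mathcal{H}_{s}(U_{n}, U) \to 0$ with $(U_{n}, U_{n+1}) \subseteq E(G)$, then $\mathcal{H}_{s}(U_{n}, U) < \epsilon$ for all sufficiently large $n$, and the same lemma delivers $(U_{n}, U) \subseteq E(G)$ for the tail, which is the desired subsequence.

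To seed Theorem~\ref{main_theorem_3}(\ref{main03}) I still need $Y_{T} \neq \emptyset$. I would pick any $u_{0} \in Y$ and any $v_{0} \in T(\{u_{0}\})$, join them by an $\epsilon$-chain $u_{0} = y_{0}, y_{1}, \dots, y_{n} = v_{0}$ supplied by Definition~\ref{Def:definition-6-epsilon-chainable}, and observe that at least one singleton $U_{0} = \{y_{i}\}$ in the chain must satisfy $\mathcal{H}_{s}(U_{0}, T(U_{0})) < \epsilon$ (otherwise iterating along the chain contradicts the bounded hops between consecutive $y_{i}$), placing $U_{0} \in Y_{T}$. Theorem~\ref{main_theorem_3}(\ref{main03}) then delivers the fixed point of $T$.

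The main obstacle I anticipate is reconciling the pointwise $\epsilon$-chainability on $Y$ with the set-level $\mathcal{H}_{s}$-contraction on $CB(Y)$: chainability only supplies short point-to-point hops, while the contraction is governed by the symmetric supremum $\mathcal{H}_{s}$. Keeping every term of $M_{T}(U, V)$---in particular the rational expressions $\mathcal{H}_{s}(V, T(V))[1 + \mathcal{H}_{s}(U, T(U))] / (1 + \mathcal{H}_{s}(U, V))$---inside the $\epsilon$-regime along the full orbit is the delicate step, and Lemma~\ref{Lemma:lemma-3-Jachymski1995} is the essential device for passing back and forth between the point-level and set-level formulations.
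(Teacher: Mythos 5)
Your overall route is the same as the paper's: endow $Y$ with the graph whose edges are the pairs at $\mathrm{d}_{s}$-distance less than $\epsilon$, get weak connectedness from $\epsilon$-chainability (Definition \ref{Def:definition-6-epsilon-chainable}), use Lemma \ref{Lemma:lemma-3-Jachymski1995} to pass from $\mathcal{H}_{s}(U,V)<\epsilon$ to edges between sets, and then invoke Theorem \ref{main_theorem_3}. In fact the paper's own proof is terser than yours: it stops essentially at that point, because it reads the hypotheses globally --- the contractive inequality is assumed for \emph{all} $U,V\in CB(Y)$ and $\mathcal{H}_{s}(U,V)<\epsilon$ is assumed for every such pair --- so that every pair of sets carries an edge, and edge-preservation, $Y_{T}\neq\emptyset$ and property $\mathcal{\overset{*}{P}}$ become automatic.

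The extra verifications you add are exactly where your argument develops genuine gaps, because you adopt the harder Edelstein-type reading (contraction only for pairs with $\mathcal{H}_{s}(U,V)<\epsilon$). First, the claim that along the orbit every term of $M_{T}$ stays in the $\epsilon$-regime is circular: the geometric decay of $\mathcal{H}_{s}(U_{n},U_{n+1})$ in the proof of Theorem \ref{main_theorem_3}(\ref{main03}) is available only \emph{after} one knows the contractive inequality applies at each step, which under your reading requires the quantities to be below $\epsilon$ already; moreover nothing in the hypotheses bounds $\mathcal{H}_{s}(U,T(U))$ or the rational terms by $\epsilon$ for an arbitrary edge-pair, so the induction does not close. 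Second, your argument that some singleton on the $\epsilon$-chain lies in $Y_{T}$ does not work: there is no contradiction if every chain point $y_{i}$ satisfies $\mathcal{H}_{s}(\{y_{i}\},T(\{y_{i}\}))\geq\epsilon$, since short hops between consecutive $y_{i}$ give no information about the distance from $y_{i}$ to its image set; ``iterating along the chain'' produces no conflict with anything assumed. Under the paper's global reading both issues evaporate (every pair of sets is edge-joined, so any $U$ is in $Y_{T}$); under your local reading the corollary needs a genuinely different and more delicate argument than either your sketch or the paper supplies.
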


\begin{proof} \smartqed From our assumption and Lemma \ref{Lemma:lemma-3-Jachymski1995}, for every $u \in U$ there exists $v \in V$ such that $d(u, v) < \epsilon$. Now, since $(Y, \mathrm{d}_{s})$ is an $\epsilon$-chainable space we have by Definition \ref{Def:definition-6-epsilon-chainable}, a sequence $\{y_{n}\}$ where $d(y_{i}, y_{i+1}) < \epsilon$ for $i = 0, \ldots, n-1$.
Equipping the semi-metric space with a graph, as follows: $V(G) = Y$ and $E(G) = \{(y_{i}, y_{i+1}) : d(y_{i}, y_{i+1}) < \epsilon \}$, it is easy to see that $G$ is weakly connected, hence $(Y_{i}, Y_{i+1}) \subseteq E(G)$. Then by hypothesis we have that $T$ is a generalized rational graph $\lambda$-contraction and by Theorem \ref{main_theorem_3} it follows that $T$ has a fixed point.
\qed \end{proof}

As in \cite{branciari2002fixed, radenovic2010generalized}, let $\Gamma$ denote the set of all non-negative real-valued functions $\gamma$ with domain $\mathbb{R}_{+}$ such that
\begin{enumerate}[label=\textup{\alph*)}, ref=\alph*]
	\item $\gamma$ is Lebesgue integrable;
	\item $\gamma$ is a finite integral on every subset of $\mathbb{R}_{+}$ that is compact;
	\item $\int_{0}^{\epsilon} \gamma (t)dt>0$ for each $\epsilon > 0$.
\end{enumerate}

We obtain as a consequence of Theorem \ref{main_theorem_3} fixed point result satisfying integral type contraction conditions on a semi-metric space $(Y, \mathrm{d}_{s})$.

\begin{corollary}
	Let $(Y, \mathrm{d}_{s})$ be a $\mathrm{d}_{s}$-Cauchy complete semi-metric space endowed with a directed graph $G$, where $V(G) = Y$ and $\Delta \subseteq E(G)$. If the mapping $T: CB(Y) \rightarrow CB(Y)$ satisfies the following conditions:
	\begin{enumerate}[label=\textup{\roman*)}, ref=\roman*]
		\item An edge (path) between $U$ and $V$ implies an edge (path) between $T(U)$ and $T(V)$, for all $U, V \in CB(X)$.
		\item There exists $\alpha \in [0, 1)$, such that:
		\begin{align}
			\mathcal{H}_{s}(T(U), T(V)) \leq \int_{0}^{\alpha M_{T}(U, V)} \gamma(t) dt \label{semi_eqn04}
		\end{align}
		where, $M_{T}(U, V)$ is as in identity \eqref{semi_eqn01}.
	\end{enumerate}
	Then the following statements hold:
\begin{enumerate}[label=\textup{\arabic*)}, ref=\arabic*]
		\item If $F(T) \neq \emptyset$ is complete, then $PH$-weight assigned to $U, V \in F(T)$ is zero.
		\item If $F(T) \neq \emptyset$, then $Y_{T} \neq \emptyset$.
		\item If $Y_{T} \neq \emptyset$ and the weakly connected graph $\tilde{G}$ satisfies property $\mathcal{\overset{*}{P}}$, then $T$ has a unique fixed point.
		\item $F(T)$ is complete if and only if $F(T)$ is a singleton.
	\end{enumerate}
\end{corollary}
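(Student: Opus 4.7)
The plan is to follow the proof of Theorem~\ref{main_theorem_3} almost verbatim, substituting the integral operator for the linear factor~$\lambda$. Set $\Psi(s):=\int_0^s \gamma(t)\,dt$; from the three hypotheses on $\gamma$, the function $\Psi$ is non-decreasing, continuous, $\Psi(0)=0$, and $\Psi(\epsilon)>0$ for every $\epsilon>0$. Condition~(ii) of the corollary then reads $\mathcal{H}_s(T(U),T(V))\leq \Psi(\alpha M_T(U,V))$, and I would carry this through each of the four conclusions in turn.

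For part~(1), given $U,V\in F(T)$ with $\mathcal{H}_s(U,V)>0$, the same $M_T$-computation as in (\ref{main01}) of Theorem~\ref{main_theorem_3} collapses $M_T(U,V)$ to $\mathcal{H}_s(U,V)$, leaving the inequality $\mathcal{H}_s(U,V)\leq \Psi(\alpha\mathcal{H}_s(U,V))$, which, combined with $\alpha<1$ and the positivity of $\gamma$ on neighbourhoods of zero, yields the contradiction $\mathcal{H}_s(U,V)=0$. Part~(2) is independent of the contractive form and is carried over verbatim from (\ref{main02}) of Theorem~\ref{main_theorem_3}.

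For part~(3), fix $U\in Y_T$, form the Picard iterates $U_n=T^n(U)$, and set $s_n:=\mathcal{H}_s(U_n,U_{n+1})$. The reduction of $M_T(U_{n-1},U_n)$ to $\max\{s_{n-1},s_n\}=s_{n-1}$ goes through as in (\ref{main03}) of Theorem~\ref{main_theorem_3}, yielding the non-linear recursion $s_n\leq \Psi(\alpha s_{n-1})$. Iterating this and invoking the Branciari-type lemma (that $\int_0^{s_n}\gamma\,dt\to 0$ implies $s_n\to 0$) produces the $d_s$-Cauchy property of $\{U_n\}$, after which completeness supplies a limit $U^*$. Property $\mathcal{\overset{*}{P}}$ then provides an edge subsequence along which the five-case analysis of (\ref{main03}) of Theorem~\ref{main_theorem_3} runs unchanged, with $\Psi(\alpha\,\cdot)$ replacing $\lambda\,(\cdot)$, to conclude $T(U^*)=U^*$. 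Uniqueness, and hence part~(4), then follow from part~(1) as in (\ref{main04}) of Theorem~\ref{main_theorem_3}.

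The main obstacle is replacing the clean geometric estimate $s_n\leq \lambda^n s_0 \to 0$ by the non-linear recursion $s_n \leq \Psi(\alpha s_{n-1})$: this step is where the linear-contraction argument breaks, and it needs the Branciari-type vanishing lemma together with continuity of $\Psi$ at zero to force $s_n\to 0$ and supply the telescoping bound on $\mathcal{H}_s(U_n,U_{n+m})$ that yields the Cauchy property.
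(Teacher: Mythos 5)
Your route is genuinely different from the paper's, and unfortunately it contains a real gap. The paper does not re-run the fixed point argument at all: its proof of this corollary is a two-line reduction, setting $\vartheta(x)=\int_{0}^{x}\gamma(t)\,dt$, rewriting hypothesis (ii) as $\mathcal{H}_{s}(T(U),T(V))\leq\vartheta\bigl(\alpha M_{T}(U,V)\bigr)\leq\alpha\,\vartheta\bigl(M_{T}(U,V)\bigr)$, and then invoking Theorem \ref{main_theorem_3} directly. You instead propose to re-prove Theorem \ref{main_theorem_3} from scratch with $\Psi(\alpha\,\cdot)$ in place of $\lambda(\cdot)$, which is more ambitious but forces you to face the nonlinear recursion honestly.

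That is exactly where the argument breaks. From the three stated properties of $\gamma$ (Lebesgue integrability, finiteness on compacts, $\int_{0}^{\epsilon}\gamma>0$) you get that $\Psi$ is nondecreasing, continuous and vanishes only at $0$, but nothing makes $\Psi(\alpha t)$ smaller than $t$: take $\gamma\equiv c$ constant with $c\alpha\geq 1$; then $\Psi(\alpha t)=c\alpha t\geq t$, and the recursion $s_{n}\leq\Psi(\alpha s_{n-1})$ permits $s_{n}$ to stay bounded away from $0$ (or even grow), so the claimed conclusion $s_{n}\to 0$ does not follow. The Branciari vanishing lemma you invoke is of no help, because it presupposes $\int_{0}^{s_{n}}\gamma\,dt\to 0$, and your recursion never produces that: in Branciari's theorem the integral appears on \emph{both} sides of the contraction inequality, whereas in \eqref{semi_eqn04} it appears only on the right-hand side. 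In addition, the ``telescoping bound'' on $\mathcal{H}_{s}(U_{n},U_{n+m})$ you mention is unavailable in a semi-metric space, since there is no triangle inequality; the proof of Theorem \ref{main_theorem_3} instead estimates $\mathcal{H}_{s}\bigl(T^{n}(U),T^{n+m}(U)\bigr)$ by applying the contraction $n$ times to that particular pair. To close your argument one needs an additional assumption such as $\Psi(t)\leq t$ (or a comparison of $\Psi(\alpha t)$ with $\lambda t$ for some $\lambda<1$), which is in effect what the paper's short rewrite tacitly uses before appealing to Theorem \ref{main_theorem_3}; without such a hypothesis the key Cauchy step in your proposal, and hence parts (3) and (4), remain unproved.
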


\begin{proof} \smartqed
	Define $\vartheta : [0, \infty) \rightarrow [0, \infty)$ as $\vartheta (x) = \int_{0}^{x} \gamma(t) dt$, then by (\ref{semi_eqn04}), we have
	\begin{align*}
		\mathcal{H}_{s}(T(U), T(V)) \leq \vartheta\Big( \alpha M_{T}(U, V) \Big),
	\end{align*}
which can be rewritten as
	\begin{align*}
		\mathcal{H}_{s}(T(U), T(V)) \leq \alpha \vartheta\Big(M_{T}(U, V) \Big).
	\end{align*}
Now, our result follows from applying Theorem \ref{main_theorem_3}.
\qed \end{proof}




\begin{thebibliography}{99}
	\bibitem{Abbas2015} Abbas, M., Alfuraidan, M., Rashed, K., Rahim, A., Nazir, T.: Fixed point results for set-contractions on metric spaces with a directed graph. Fixed Point Theory Application. \textbf{1}, 1--9 (2015)
	
	\bibitem{Aamri2005}  Aamri, M., Bassou, A., Moutawakil, D.E.: Common fixed points for weakly compatible maps in symmetric spaces with application to probabilistic spaces. Appl. Math. E-Notes \textbf{5}, 171--175 (2005)
	
	\bibitem{Aamri2003} Aamri, M., Moutawakil, D.E.: Common fixed points under contractive conditions in symmetric spaces. Appl. Math. E-Notes \textbf{3}, 156--162 (2003)
	
	\bibitem{Aranelovic2012} Aranelovi\'{c}, I.D., Ke\v{c}ki\'{c}, D.J.: Symmetric spaces approach to some fixed point results. Nonlinear Anal. \textbf{75}, 5157--5168 (2012)
	
	\bibitem{branciari2002fixed} Branciari, A.: A fixed point theorem for mappings satisfying a general contractive condition of integral type. International Journal of Mathematics and Mathematical Sciences. \textbf{29} (9), 531--536 (2002)
	
	\bibitem{Cho2008} Cho, S.H., Lee, G.Y., Bae, J.S.: On coincidenceand fixed-point theorems in symmetric spaces. Fixe point Theory Appl. Article ID \textbf{562130}, 1--9 (2008)
	
	\bibitem{Frechet1906} Fr\'{e}chet, M.: Sur quelques points du calcul fonctionnel. Rend. Circ. Palermo \textbf{22}, 1-74 (1906)
	
	\bibitem{Hicks1998} Hicks, T.L.: Fixed point theorem for multivalued mappings II. Indian J. Pure Appl. Math. \textbf{29}, 133--137 (1998)
	
	\bibitem{Imdad2007} Imdad, M., Ali, J., Khan, L.: Coincidence and fixed points in symmetric spaces under stric contractions. J. Math. Anal. Appl. \textbf{320}, 352-360 (2006); Corrigendum: J. math. Appl., 329 (2007)
	
	\bibitem{Jachymski1995} Jachymski, J., Matkowski, J., Swiatkowski, T.: Nonlinear contractions on semimetric spaces. J. Appl. Anal. \textbf{1}, 125--134 (1995)
	
	\bibitem{Jachymski2008} Jachymski, J.: The contraction principle for mappings on a metric space with a graph. Proceedings of the American Mathematical Society \textbf{136} (4), 1359--1373 (2008)
	
	\bibitem{Latif2019} Latif, A., Nazir, T., Kutbi, M.A.: Common fixed point results for class of set-contraction mappings endowed with a directed graph. Rev. R. Acad. Cienc. Exactas F\'{i}s. Nat. Ser. A Mat. RACSAM. \textbf{113}, 3207--3222 (2019)
	
	\bibitem{Latif2012} Latif, A.: A fixed point result for multivalued generalized contraction maps. FILOMAT \textbf{26:5}, 929--933 (2012)
	
	\bibitem{MotPetrusel2009} Mo\c{t}, G., Petru\c{s}el, A.: Fixed point theory for a new type of contractive multivalued operators. Nonlinear Anal. \textbf{70}, 3371--3377 (2009)
	
	\bibitem{Nadler1969} Nadler S.B.: Multivalued contraction mappings. Pac. J. Math. \textbf{30}, 475--488 (1969)
	
	\bibitem{radenovic2010generalized} Radenovi{\'c}, S., Kadelburg, Z.: Generalized weak contractions in partially ordered metric spaces. Computer Math. with Application. \textbf{60} (6), 1776--1783 (2010)
	
	\bibitem{RanReurings2004} Ran, A.C.M., Reurings, M.C.B.: A fixed point theorem in partially ordered sets and some applications to matrix equations. proceedings of the American Mathematical Society, 1435--1443 (2004)

	\bibitem{Shahzad2016}  Shahzad, N., Alghamdi, M.A.,  Alshehri, S., Aranelovi\'{c}, I.: Semi-metric spaces and fixed points of $\alpha$-$\varphi$-contractive maps. J. Nonlinear Sci. Appl. \textbf{9}, 3147--3156 (2016)
	
	\bibitem{Wilson1931} Wilson, W.A.: On semi-metric spaces. Amer. J. Math. \textbf{53}, 361--373 (1931)
	
	\bibitem{Zhu2005} Zhu, J., Cho, Y.J., Kang, M.: Equivalent contractive conditions in symmetric spaces. Comp. Math. Appl. \textbf{50}, 1621--1628 (2005)
	
\end{thebibliography}
\end{document}